\newcommand\tikzmark[1]{%
  \tikz[remember picture,overlay]\coordinate (#1);}
\newcommand{\Z}{\mathbb{Z}}
\newcommand{\N}{\mathbb{N}}
\renewcommand{\P}{\mathbb{P}}
\newcommand{\PP}{\P^1\times\P^1}
\newcommand{\mc}[1]{\mathcal{#1}}
\DeclareMathOperator{\reg}{reg}
\DeclareMathOperator{\Pic}{Pic}
\DeclareMathOperator{\Cox}{Cox}
\DeclareMathOperator{\rank}{rank}
\DeclareBoldMathCommand{\balpha}{\alpha}
\DeclareBoldMathCommand{\blambda}{\lambda}
\DeclareBoldMathCommand{\br}{r}
\DeclareBoldMathCommand{\ba}{a}
\DeclareBoldMathCommand{\bs}{s}
\DeclareBoldMathCommand{\bp}{p}
\DeclarePairedDelimiterX\Set[1]\{\}{#1}
\theoremstyle{plain}
	\newtheorem{theorem}{Theorem}[section]
	\newtheorem{lemma}[theorem]{Lemma}
    \newtheorem{proposition}[theorem]{Proposition}
    \newtheorem{quest}[theorem]{Question}
\theoremstyle{definition}
    \newtheorem{definition}[theorem]{Definition}
    \newtheorem{example}[theorem]{Example}
\theoremstyle{remark}
	\newtheorem{remark}[theorem]{Remark}
\newcommand{\ds}{\displaystyle}
\title{Hilbert--Burch virtual resolutions for points in $\P^1\times \P^1$}
\thanks{Booms-Peot is supported by NSF GRFP fellowship DGE-1747503 and by the Graduate School and OVCRGE at UW-Madison with funding from the Wisconsin Alumni Research Foundation.}
\keywords{virtual resolutions, points, multi-projective spaces, Minimal Resolution Conjecture}
\subjclass{13D02, 14M25}
\date{\today}
\author{Caitlyn Booms-Peot}
\address{Department of Mathematics, University of Wisconsin, Madison, WI}
\email{cbooms@math.wisc.edu}
\begin{document}

\maketitle

\begin{abstract}
Building off of work of Harada, Nowroozi, and Van Tuyl which provided particular length two virtual resolutions for finite sets of points in $\PP$, we prove that the vast majority of virtual resolutions of a pair for minimal elements of the multigraded regularity in this setting are of Hilbert--Burch type. We give explicit descriptions of these short virtual resolutions that depend only on the number of points. Moreover, despite initial evidence, we show that these virtual resolutions are not always short, and we give sufficient conditions for when they are length three.

\end{abstract}

\section{Introduction}
Virtual resolutions, which were defined by Berkesch, Erman, and Smith \cite{BES20} as natural analogues to minimal free resolutions in the setting of smooth projective toric varieties, have been a topic of much recent research \cites{Dowd2019VIRTUALRO,virtualM2,DuarteSeceleanu2020,kenshur2020virtually,GAO2021106473,Loper2021,VirtuallyCMsheaves,YangVirtualMonomial,HNVT,BOOMSPEOT2022,brownerman2023}. Since one of their most notable properties is being shorter than minimal free resolutions while still capturing important geometric information, many investigations have sought out short virtual resolutions. Specifically, given a smooth projective toric variety $X$ with Cox ring $S$, a virtual analog of Hilbert's Syzygy Theorem would ensure that $S$-modules have virtual resolutions of length at most $\dim X$ (whereas their minimal free resolutions could have length up to $\dim S = \dim X + \rank \Pic(X)$). Such a result was shown for particular cases of $X$ and conjectured for more general $X$ in \cites{BES20,YangVirtualMonomial,BrownSayrafi}; recently, work of Hanlon-Hicks-Lazarev resolved these conjectures by proving that short virtual resolutions of length at most $\dim X$ exist when $X$ is any smooth projective toric variety \cites{hanlon2023resolutions} (see also \cite{brownerman2023}).
However, even though short virtual resolutions are known to exist, there are still more precise questions about the structure of such resolutions, including understanding which ones best capture geometric or algebraic data.

This work focuses on exploring virtual resolutions in the simplest nontrivial geometric setting: we let $X$ be a finite set of points in $\PP$, and we analyze virtual resolutions of $S/I_X$, where $S=\Cox(\PP)$ and $I_X$ is the defining ideal of the points. In the classical case of points in $\P^2$, every minimal free resolution is of \textbf{Hilbert--Burch type}, i.e. the resolution has the shape $S \gets S^{n+1} \gets S^n \gets 0$ and the defining ideal is given by the maximal minors of the syzygy matrix \cite{bruns-herzog}*{Theorem 1.4.17}. In our case, we want to understand the relationships among the minimal free resolution of $S/I_X$, the multigraded regularity of $S/I_X$, and the virtual resolutions that are of Hilbert--Burch type. An example will help illustrate the main ideas.

\begin{example}\label{ex: 4points}
Let $X$ be the following set of four points in $\PP$:
\[X = \{([1:0],[0:1]), ([0:1],[1:0]), ([1:1],[1:1]), ([1:2],[3:1])\}.\]
The minimal free resolution of $S/I_X$ is \footnote{This is the same as the minimal free resolution of four sufficiently general points (as defined in \cref{sec: background}), and for this example, we are assuming that the characteristic of the underlying field is not 2 or 3.}
{\footnotesize
\[\mc{F}: \quad \quad 0 \gets S \gets
    \begin{array}{c}
        S(0,-4)\\
        \oplus \\
        S(-1,-2)^2\\
        \oplus\\
        S(-2,-1)^2\\
        \oplus \\
        S(-4,0)
    \end{array}
    \gets
    \begin{array}{c}
        S(-1,-4)^2\\
        \oplus\\
        S(-2,-2)^3\\
        \oplus\\
        S(-4,-1)^2
    \end{array}
    \gets
    \begin{array}{c}
        S(-2,-4)\\
        \oplus \\
        S(-4,-2)
    \end{array}
    \gets 0.\]
}
\noindent The multigraded regularity of $S/I_X$ turns out to be the region in $\Z^2$ consisting of points $(i,i')$ such that ${(i+1)(i'+1)\geq 4}$ (see \Cref{prop: reg}). The three minimal elements of the regularity---$(0,3), (1,1),$ and $ (3,0)$---each give a virtual resolution of a pair for $S/I_X$ (see \cref{def: VirtualResolutionOfPair}). For example, the virtual resolution of the pair $(S/I_X,(0,3))$, which is the subcomplex of $\mc{F}$ consisting of all summands generated in degree up to $(1,4)$, is given by
{\footnotesize
\[(S/I_X,(0,3)): \quad \quad 0 \gets S \overset{A}{\longleftarrow}
    \begin{array}{c}
        S(0,-4)\\
        \oplus \\
        S(-1,-2)^2
    \end{array}
    \overset{B}{\longleftarrow}
    \begin{array}{c}
        S(-1,-4)^2
    \end{array}
    \gets 0 \quad \quad \quad \quad \quad \quad \quad
    \]
\[
\text{where  }
A =
\begin{bmatrix}
    y_0^3y_1-4y_0^2y_1^2+3y_0y_1^3 & 4x_0y_0^2-7x_1y_0y_1+3x_1y_1^2 & 4x_0y_0y_1-x_1y_0y_1-3x_1y_1^2
\end{bmatrix}
\]
\[
\text{and  }
B = 
\begin{bmatrix}
    4x_0 & x_1\\
    -y_0y_1-3y_1^2 & -y_0y_1\\
    7y_0y_1-3y_1^2 & y_0^2
\end{bmatrix}.
\]
}
\noindent Notice that this is a Hilbert--Burch resolution where the $2\times 2$ minors of $B$ generate the same ideal as the entries in $A$. Similarly, one can check that the virtual resolutions of a pair $(S/I_X, (1,1))$ and $(S/I_X, (3,0))$ are also length two and of Hilbert--Burch type.
\end{example}

In \Cref{ex: 4points}, each minimal element of the multigraded regularity of $S/I_X$ yielded a short virtual resolution of a pair that was of Hilbert--Burch type. The primary purpose of this paper is to analyze such virtual resolutions of a pair $(S/I_X, (i,i'))$ (see \cref{def: VirtualResolutionOfPair}) for minimal elements of the multigraded regularity (see \Cref{prop: reg}).
Motivated by the previous example and recent work of Harada, Nowroozi, and Van Tuyl \cite{HNVT}, we pose the following question.

\begin{quest} \label{qu: allshort}
If $X$ is a finite set of points in sufficiently general position in $\PP$ (see \cref{sec: background}) and $(i,i')$ is a minimal element of the multigraded regularity of $S/I_X$, then is the virtual resolution of a pair $(S/I_X,(i,i'))$ of Hilbert--Burch type?
\end{quest}

Initial evidence pointed towards a positive answer. Specifically, Theorem 3.1 in \cite{HNVT} gives an affirmative answer for minimal elements of regularity $(i,i')$ satisfying $(i+1)(i'+1)=|X|$. Furthermore, in \cite{HNVT}*{Remark 3.8}, the authors say that computer experimentation suggests that the answer to \Cref{qu: allshort} may be yes for more, or perhaps all, minimal elements of regularity. Our own experimentation confirmed this observation for small sets of points as well.

Our main results show that \Cref{qu: allshort} is a bit nuanced. In the positive direction, \Cref{thm: mainthmshort} shows that most such virtual resolutions are of Hilbert--Burch type; more specifically, away from some particular numerical inequalities, this is the case. But, in the negative direction, \Cref{thm: converse} gives that when certain numerical criteria are achieved, \Cref{qu: allshort} can and does have a negative answer.

\begin{theorem}[\Cref{thm: mainthm}] \label{thm: mainthmshort}
Let $X$ be a set of $n\geq 2$ points in sufficiently general position in $\PP$,
and let $(i,i')$ be a minimal element of $\reg(S/I_X)$. By symmetry, without loss of generality, assume that $i \leq i'$. Then the virtual resolution of a pair $(S/I_X, (i,i'))$ has length two if either of the following holds:
\begin{enumerate}
    \item[(a)] $i(i'+2) \leq n$, or
    \item[(b)] $i(i'+2) > n$, $-3n+3ii'+4i+i'\leq 0$, and $3n-3ii'-2i-2i' \geq 0$.
\end{enumerate}
\end{theorem}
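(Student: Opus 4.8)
The plan is to read the statement off the bigraded Betti numbers of $S/I_X$ together with the combinatorics of the truncation that defines a virtual resolution of a pair. First I would record the minimal free resolution $\mc{F}$ of $S/I_X$ for $n$ points in general position. Since $X$ is general, $S/I_X$ has the expected (minimal) Betti numbers, so these are forced by the Hilbert function $H(i,i')=\min\{(i+1)(i'+1),n\}$: writing the $K$-polynomial $K(t_1,t_2)=H_{S/I_X}(t_1,t_2)(1-t_1)^2(1-t_2)^2=\sum_j(-1)^j\sum_{(a,b)}\beta_{j,(a,b)}t_1^at_2^b$, the alternating sum determines the twists. Concretely $K=1-D$, where $D$ is obtained by applying the second difference in each variable to $\big((i+1)(i'+1)-n\big)\,\mathbf{1}_{\{(i+1)(i'+1)\ge n\}}$; since the bracketed function is bilinear away from the kink, $D$ is supported on the staircase of the hyperbola $(i+1)(i'+1)=n$ (one checks this reproduces the resolution in \Cref{ex: 4points} for $n=4$). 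Its negative terms are the second syzygies $F_2$, and its positive terms split into the generators $F_1$, which sit in low degree, and the third syzygies $F_3$, which sit in high degree; in general position there is no consecutive cancellation, so this splitting is unambiguous.

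Next I would invoke \cref{def: VirtualResolutionOfPair}: the virtual resolution of the pair $(S/I_X,(i,i'))$ is the subcomplex of $\mc{F}$ consisting of those summands $S(-a,-b)$ with $(a,b)\le(i+1,i'+1)$ coordinatewise, exactly as in \Cref{ex: 4points}. Hence this complex has length two precisely when no generator of $F_3$ lies in the box $[0,i+1]\times[0,i'+1]$. Thus the whole statement reduces to the purely numerical assertion that, under hypothesis (a) or (b), every bidegree $(a,b)$ with $\beta_{3,(a,b)}\ne 0$ satisfies $a>i+1$ or $b>i'+1$.

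To finish I would determine the bidegrees of the generators of $F_3$ from the staircase computation and compare them with the box. These come in two kinds: \emph{tail} generators, having one coordinate small (equal to $2$ or $3$) and the other large, which never fit the box because their large coordinate exceeds $i'+1$; and \emph{central} generators clustered near the antidiagonal around $(\sqrt{n},\sqrt{n})$, among which only the one nearest the corner $(i+1,i'+1)$ can threaten to lie inside. I would then show that hypothesis (a), $i(i'+2)\le n$, covers the rectangular regime $i'\ge 2i-1$ (and the large-$n$ part of the remaining regime), where the central third syzygies are pushed to first coordinate $>i+1$; and that the two inequalities in (b) are exactly the bounds on $n$ that keep the corner of the box clear in the near-diagonal regime $i'\le 2i-2$ (equivalently, they place the nearest central third syzygy strictly outside $[0,i+1]\times[0,i'+1]$). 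Combining the two cases shows $F_3$ contributes nothing after truncation, so the virtual resolution of a pair has length two. (As a sanity check, for the minimal element $(4,4)$ the boundary value $n=21$ violates both inequalities in (b) and indeed produces a third syzygy at the corner $(5,5)$, i.e.\ a length-three resolution, which is what \Cref{thm: converse} detects.)

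The main obstacle is the first step together with the exact location of the central third syzygies: one must pin down the bidegrees carrying $\beta_{3,(a,b)}\ne 0$ precisely, not merely up to the alternating sum $D$, which requires controlling the staircase of $\lceil n/(i+1)\rceil$ near the diagonal and verifying that general position forces the cancellation-free (minimal) Betti numbers there. Once the innermost third-syzygy bidegrees are known, translating ``lies outside $[0,i+1]\times[0,i'+1]$'' into the displayed inequalities is bookkeeping, but it needs a careful case split on the type of the minimal element $(i,i')$ (namely $i'$ versus $2i$) and attention to the residues arising from the ceiling functions in the staircase.
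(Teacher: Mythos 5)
Your overall strategy---read the Betti numbers off the second differences of the generic Hilbert function, truncate at $(i+1,i'+1)$, and check that no third syzygy lands in the box---is the same skeleton as the paper's argument. But the step you dispose of in one clause, ``in general position there is no consecutive cancellation, so this splitting is unambiguous,'' is precisely the bigraded Minimal Resolution Conjecture for points in $\PP$, which is \emph{open} (see \Cref{rem: MRC}: the graded version for points on a quadric in $\P^3$ is known, but the bigraded statement in $\Cox(\PP)$ is not). The second difference only gives you the alternating sum $d_{r,r'}=-\beta_{1,\br}+\beta_{2,\br}-\beta_{3,\br}$ (\Cref{prop: bettisum}), and \Cref{thm: suffgenpts} pins down only $\beta_1$. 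Nothing in the hypotheses lets you conclude that a positive $d_{r,r'}$ equals $\beta_{2,\br}$ rather than $\beta_{2,\br}-\beta_{3,\br}$ with both nonzero---and ruling out a hidden $\beta_{3,(i+1,i'+1)}$ is exactly what the theorem requires. You flag this yourself as ``the main obstacle,'' but you offer no mechanism to overcome it, so the proposal is incomplete at its load-bearing step.

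The paper's way around this is \Cref{lem: first positives}: the \emph{first} positive entry in any row (or column) of $\Delta^2 H_X$ always equals $\beta_{2}$ in that degree, with no earlier first syzygies in that row. The proof filters the minimal free resolution by the first coordinate of the twists, identifies each graded piece $\mc{C}^r$ as the cokernel of an inclusion of two virtual resolutions of a pair, and applies the Acyclicity Lemma over $k[y_0,y_1]$ to show $\mc{C}^r$ is itself a minimal free resolution of a finite-length module; the strictly increasing generator degrees in that short resolution then force the cancellation-free reading of the first positive entry. The case analysis in \Cref{lem: negative d entries} is then designed so that, under $(a)$ or $(b)$, every positive entry in the box is the first positive entry of its row or column, so the lemma applies and $\beta_3$ vanishes there. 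Your numerical case split on $i'$ versus $2i$ and the location of the ``corner'' entry is consistent with that analysis, but without a substitute for \Cref{lem: first positives} the argument does not close.
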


A fuller statement is found in \Cref{thm: mainthm} and includes how these Hilbert--Burch virtual resolutions are determined by the Hilbert function of $X$. For $n\leq 10,000$ points, Macaulay2 \cite{M2} gives that either condition $(a)$ or $(b)$ in \Cref{thm: mainthmshort} is satisfied by nearly $89.1\%$ of the minimal elements of regularity. Hence, this theorem shows that the vast majority of virtual resolutions of a pair for minimal elements of regularity for generic points in $\PP$ are of Hilbert--Burch type. Since elements $(i,i')$ such that $(i+1)(i'+1)=n$ satisfy condition $(a)$, \Cref{thm: mainthmshort} greatly extends the work of Harada, Nowroozi, and Van Tuyl (see \cref{rem: relationtoHNVT}), as demonstrated below in \Cref{ex: 502points}.

Our second result gives a condition that guarantees that the virtual resolution of a pair is \emph{not} length two, providing a partial converse of \Cref{thm: mainthmshort}.
\begin{theorem}\label{thm: converse}
    Assume the hypotheses of \Cref{thm: mainthmshort}. If $i(i'+2)>n$ and\\${3n-3ii'-2i-2i'<0}$, then the virtual resolution of a pair $(S/I_X,(i,i'))$ has length three.
\end{theorem}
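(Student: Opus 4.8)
The plan is to establish a length-three lower bound by showing that the virtual resolution of a pair cannot be truncated to length two, complementing the length-two result of \Cref{thm: mainthmshort}. The starting point is the same structural framework: the virtual resolution of the pair $(S/I_X,(i,i'))$ is obtained as the subcomplex of the minimal free resolution $\mc{F}$ of $S/I_X$ consisting of those summands generated in degrees $\preceq (i+1,i'+1)$ (componentwise). So the question of its length reduces to a question about which Betti numbers of $S/I_X$ survive this truncation. The whole argument therefore rests on having, for generic points in $\PP$, explicit control of the multigraded Betti numbers of $S/I_X$ in terms of $n$, which I expect is supplied by the Minimal Resolution Conjecture package in this setting (the same Hilbert-function data that drives \Cref{thm: mainthm}); I would invoke that description wherever the shape of $\mc{F}$ is needed.

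First I would translate the two numerical hypotheses $i(i'+2)>n$ and $3n-3ii'-2i-2i'<0$ into statements about the relevant graded pieces of the Hilbert function of $S/I_X$ near the degree $(i,i')$, exactly as in the proof of \Cref{thm: mainthmshort}, but now observing that the \emph{same} inequalities that were excluded there are precisely the ones that force an extra syzygy to appear in the truncated complex. Concretely, I would compute the expected ranks of the free modules in the truncation in degrees $(i+1,i'+1)$ and compare the alternating sum against the Hilbert polynomial: the hypothesis $i(i'+2)>n$ should guarantee that the degree-$(i+1,i'+1)$ strand contributes genuinely (so the complex does not stop at length two for rank reasons), while $3n-3ii'-2i-2i'<0$ should guarantee that the cancellation needed to produce a length-two resolution is numerically impossible. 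The key computation is an Euler-characteristic / rank count: if the first two free modules already had the right ranks to resolve $S/I_X$ in the truncated degree range, then a certain expression in $n,i,i'$ would have to be nonnegative, and the second hypothesis says it is strictly negative.

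The heart of the argument, and the step I expect to be the main obstacle, is ruling out the possibility that the required third syzygy module is itself killed by the truncation, i.e. that its generators lie in degrees outside $\preceq (i+1,i'+1)$. Establishing length three (rather than merely ``not length two'') requires showing both that a rank-three term is forced \emph{and} that its generating degrees survive the pair truncation. I would handle this by pinning down, under the given inequalities, the precise bidegrees of the third-syzygy generators from the conjectured Betti table and checking that at least one of them satisfies the truncation bound; here the asymmetry assumption $i\le i'$ and the strictness of $3n-3ii'-2i-2i'<0$ should do the work, preventing the degenerate cancellation that would otherwise drop the length. A delicate point is that genericity is essential: the Betti-number formulas only hold for points in sufficiently general position, and I would need the relevant consecutive cancellation to be as small as the generic (minimal) prediction, so that no unexpected extra cancellation collapses the length-three strand. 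Finally, I would assemble these pieces to conclude that the truncated complex has exactly three nonzero free modules, giving length three, and cross-check the conclusion against small cases in Macaulay2 to confirm the inequalities demarcate precisely the complement of the region covered by conditions $(a)$ and $(b)$ of \Cref{thm: mainthmshort}.
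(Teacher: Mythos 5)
Your central numerical idea---that an Euler-characteristic count in degree $(i+1,i'+1)$ is forced to be strictly negative by the hypothesis $3n-3ii'-2i-2i'<0$, and that this forces a third syzygy---is exactly the mechanism the paper uses: in Case 3 of \Cref{lem: negative d entries} one has $d_{i+1,i'+1}=3n-3ii'-2i-2i'<0$, and \Cref{prop: bettisum} identifies this second difference with $-\beta_{1,(i+1,i'+1)}+\beta_{2,(i+1,i'+1)}-\beta_{3,(i+1,i'+1)}$. However, there is a genuine gap in how you propose to justify the surrounding Betti data. You plan to read the shape of $\mc{F}$ off the ``Minimal Resolution Conjecture package,'' but the paper is explicit (\Cref{rem: MRC}) that the MRC is \emph{not} known for the bigraded Betti numbers over $\Cox(\PP)$; the entire point of \Cref{lem: first positives} and the case analysis in \Cref{lem: negative d entries} is to work around its absence. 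What is actually needed is much weaker: $\beta_{1,(i+1,i'+1)}=0$ (from \Cref{thm: suffgenpts}, since the corner entry has no positive entry to its right or below it), after which $\beta_{2,(i+1,i'+1)}-\beta_{3,(i+1,i'+1)}=d_{i+1,i'+1}<0$ forces $\beta_{3,(i+1,i'+1)}>0$ without knowing either Betti number exactly. Indeed the paper cannot and does not compute $\beta_{3,(i+1,i'+1)}$; it only shows positivity (and in \Cref{ex: 21points} checks $\beta_{2,(5,5)}=0$ by computer, not by the MRC).

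Second, the step you single out as ``the heart of the argument''---verifying that the third-syzygy generators survive the truncation to degrees $\preceq(i+1,i'+1)$---is a non-issue once the argument is set up correctly: the forced second syzygy of $I_X$ lives precisely in the corner degree $(i+1,i'+1)$, which is by definition the outer boundary of the virtual resolution of the pair, so it automatically appears in the truncated complex. The real work your sketch leaves unaddressed is showing that the hypotheses place you in Case 3 of \Cref{lem: negative d entries} (in particular that $i'<2i$ there, which the paper uses to deduce $d_{i,i'+1}=-3n+3ii'+4i+i'>0$ and hence, via \Cref{lem: first positives}, that the nearby positive entries really are first syzygies rather than artifacts of cancellation). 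As written, your proposal would not close without replacing the MRC input by these lemmas.
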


\noindent For $n \leq 10,000$ points, Macaulay2 \cite{M2} indicates that just under $5.3\%$ of the minimal elements of regularity satisfy \Cref{thm: converse}. This provides the first evidence of negative answers to \Cref{qu: allshort}. In particular, combining the conditions from \Cref{thm: mainthmshort,thm: converse} (and directly checking the three cases that they don't cover) shows that for $n \leq 20$ points, \emph{all} of the minimal elements of regularity yield Hilbert--Burch virtual resolutions of a pair. See \Cref{ex: 21points} for the first instance where this fails when $n=21$. Together, \Cref{thm: mainthmshort,thm: converse} indicate that the answer to \Cref{qu: allshort} is yes in most cases but not all, hinting at the subtlety in answering this question fully. The only case not covered by \Cref{thm: mainthmshort,thm: converse} is when $i(i'+2)>n$, $-3n+3ii'+4i+i'>0$, and $3n-3ii'-2i-2i'\geq 0$. For $n\leq 10,000$ points, these conditions are only met for roughly $5.6\%$ of the minimal elements of regularity, and based on computational evidence we conjecture that these remaining virtual resolutions are also length two. 

Our methods build on those in \cite{HNVT}, which in turn are based on work of Giuffrida, Maggioni, and Ragusa in \cites{GMR92,GMR94,GMR96}, and use second difference functions of the bigraded Hilbert function of $S/I_X$ to predict the minimal free resolution of a generic set of points (see \cref{sec: background}). While \cite{GMR96}*{Theorem 4.3} (stated below as \Cref{thm: suffgenpts}) proves that the Hilbert function determines the minimal generators of $I_X$, the technical challenge for us amounts to determining when it correctly predicts the syzygies and second syzygies. Specifically, we need to rule out the possibility of Betti numbers which would not be forced by the Hilbert function; this amounts to understanding the Minimal Resolution Conjecture for points in $\PP$.

Originally stated for general sets of points in $\P^n$ by Lorenzini \cite{LORENZINI}, the Minimal Resolution Conjecture predicts that there are no redundant (or ghost) Betti numbers. This conjecture has been intensely studied for various $n$ (see \cites{BallicoGeramita, Walter, EPSW} for details on when it holds and fails) and has been generalized by Mustat\v{a} to points lying on arbitrary projective varieties \cite{Mustata}. This has led to several studies of the Minimal Resolution Conjecture on curves \cites{FARKAS2003553,FarkasLarson} and different surfaces \cites{Casanellas,MigliorePatnott,MiroRoigPonsLlopis,MRPL2}. See \Cref{rem: MRC} for more details about how our work is related to the Minimal Resolution Conjecture for points in $\PP$ and how a proof of this conjecture in our setting would close the gap between \Cref{thm: mainthmshort,thm: converse}, providing a full answer to \Cref{qu: allshort}.

To prove our theorems, we utilize two lemmas. In \Cref{lem: negative d entries}, we perform an in-depth analysis of the second difference functions of the Hilbert function of $S/I_X$ to understand all possible cases that can occur for generic sets of points. Then, our key novelty, which enables us to extend \cite{HNVT}*{Theorem 3.1} to the majority of the possible cases, is \Cref{lem: first positives}, which proves that the Hilbert function determines certain first syzygies. Finally, in the proof of \Cref{thm: mainthmshort}, we essentially show that if either of the given conditions holds, then the Minimal Resolution Conjecture is true for the degrees in question. Once we know that ``consecutive cancellations'' of Betti numbers do not occur, we are able to use \Cref{lem: negative d entries} to explicitly describe the virtual resolution of the pair $(S/I_X,(i,i'))$. In the one case not covered by \Cref{thm: mainthmshort,thm: converse}, the obstacle in the proof is that our techniques are not sufficient for showing that the Minimal Resolution Conjecture holds.

\begin{example}\label{ex: 502points}
This example illustrates how \Cref{thm: mainthmshort,thm: converse} can be used to understand the structure of the virtual resolutions of a pair for minimal elements of regularity. Let $X$ be a set of $n=502$ points in sufficiently general position in $\PP$, and let $I_X \subseteq S$ be its defining ideal. Then $\reg(S/I_X)=\{(i,i') \in \Z^2| (i+1)(i'+1)\geq 502\}$ has 22 minimal elements with $i\leq i'$, which are listed below and shown in \Cref{fig:502pts}.
\begin{align*}
\{&(0,501), (1, 250), (2, 167), (3, 125), (4, 100), (5, 83), (6, 71), (7,62),  (8, 55), (9, 50), (10, 45), \\
& (11, 41), (12, 38), (13, 35), (14, 33), (15, 31), (16, 29), (17, 27), (18, 26), (19, 25), (20, 23), (21, 22)\}.
\end{align*}
\begin{figure}[H]
    \centering
    \includegraphics[scale=0.7]{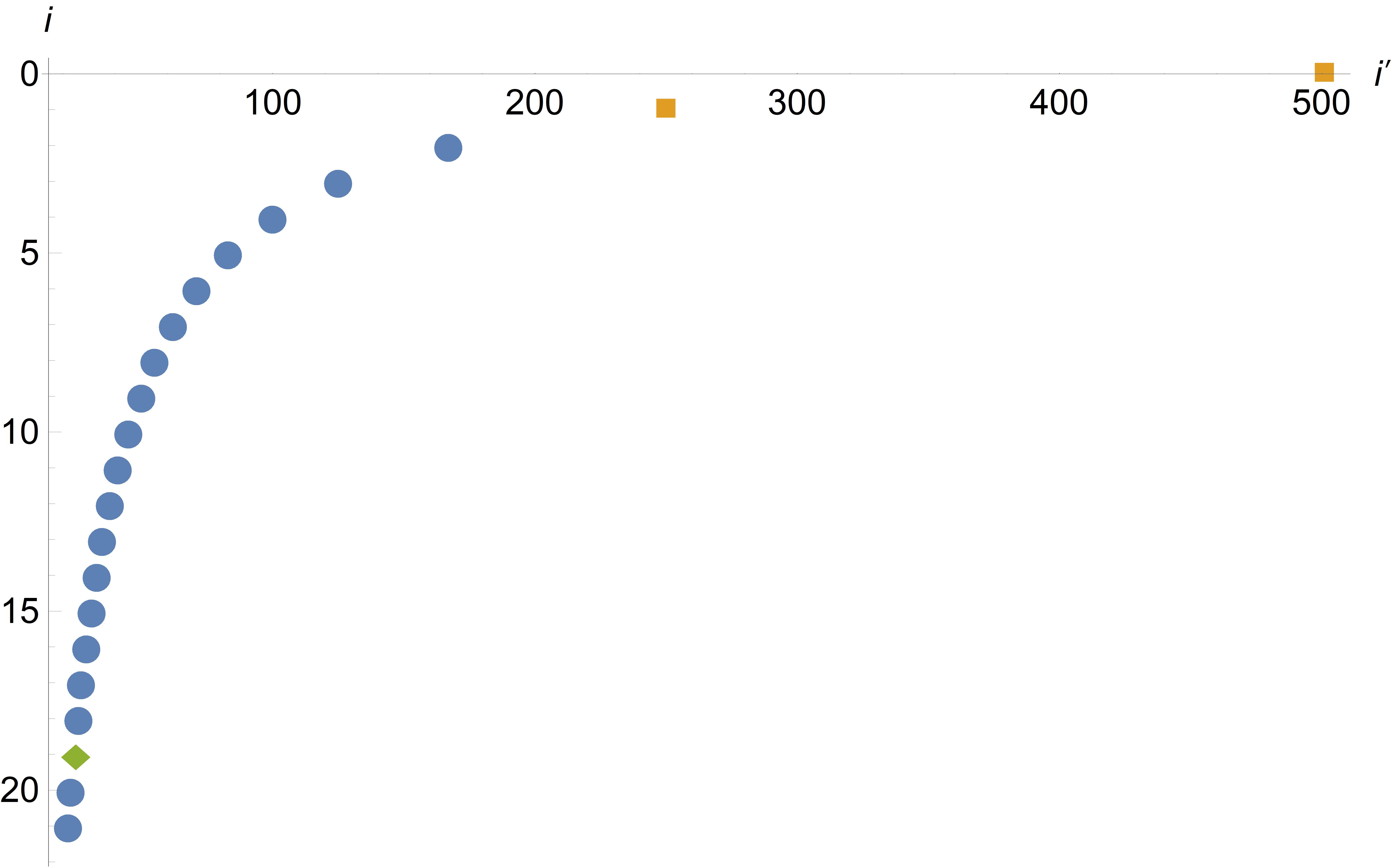}
    \caption{Minimal elements $(i,i')$ of $\reg(S/I_X)$ for 502 points with $i \leq i'$.}
    \label{fig:502pts}
\end{figure}
\end{example}
The orange squares $(0,501)$ and $(1,250)$ satisfy $(i+1)(i'+1)=502$, so by \cite{HNVT}*{Theorem 3.1} (and \Cref{thm: mainthmshort}), these elements yield Hilbert--Burch virtual resolutions of a pair. The 19 blue circles are ones where $(i+1)(i'+1)>502$ (so they are not covered by Harada, Nowroozi, and Van Tuyl's work) that satisfy either $(a)$ or $(b)$ in \Cref{thm: mainthmshort} and thus give Hilbert--Burch virtual resolutions of a pair. The green diamond $(19,25)$ does not satisfy condition $(b)$ since $-3n+3ii'+4i+i'=20$ and $3n-3ii'-2i-2i'=-7$; instead, \Cref{thm: converse} gives that the virtual resolution of a pair $(S/I_X, (19,25))$ has length three. Therefore, \Cref{thm: mainthmshort,thm: converse} provide a complete answer to \Cref{qu: allshort} for $502$ points.



\subsection*{Acknowledgements} The author would like to thank Daniel Erman for his valuable guidance throughout this project. They also thank Adam Van Tuyl, John Cobb, and Mahrud Sayrafi for their helpful conversations. The computer algebra system Macaulay2 \cite{M2} was used extensively for experimentation, especially the \texttt{VirtualResolutions} package \cite{virtualM2}.

\section{Background} \label{sec: background}
We now review the necessary background on virtual resolutions and multigraded regularity in the specific setting of $\P^1\times\P^1$. The Cox ring of $\PP$ is the $\Z^2$-graded polynomial ring $S=k[x_0,x_1,y_0,y_1]$ over an algebraically closed field $k$ of arbitrary characteristic, where $\deg(x_i)=(1,0)$ and $\deg(y_i)=(0,1)$. The irrelevant ideal of $S$ is $B = \langle x_0,x_1 \rangle \cap \langle y_0, y_1 \rangle$, and an ideal $I \subseteq S$ is homogeneous if its generators are homogeneous elements with respect to the $\Z^2$-grading. We will use the component-wise partial order on $\Z^2$ denoted by $\preceq$, where $(i,i')\preceq (j,j')$ if and only if $i \leq j$ and $i' \leq j'$, and $(i,i')\prec (j,j')$ if and only if $(i,i')\preceq (j,j')$ and either $i<j$ or $i' < j'$. We can then define virtual resolutions in this setting as follows.

\begin{definition}[\cite{BES20}]\label{VirtualResolution}
A complex $\mc{C}: F_0 \gets F_1 \gets F_2 \gets \cdots$ of $\Z^2$-graded free $S$-modules is called a \textbf{virtual resolution} of a $\Z^2$-graded $S$-module $M$ if the corresponding complex $\widetilde{\mc{C}}$ of vector bundles on $\PP$ is a locally free resolution of the sheaf $\widetilde{M}$. 
\end{definition}

Algebraically, $\mc{C}$ is a virtual resolution if all of the higher homology groups are annihilated by some power of the irrelevant ideal, i.e. for each $i\geq 1$, $B^nH_i(\mc{C})=0$ for some $n$. Note that all exact complexes are virtual resolutions, but not all virtual resolutions are exact, since they allow for ``irrelevant'' homology.

In this paper, we will focus our attention on a specific type of virtual resolution called the virtual resolution of a pair, which was introduced in \cite{BES20}. These virtual resolutions are determined by a pair of a module $M$ and an element of its multigraded regularity, $\reg(M)$, which is defined below for $\PP$ and involves the vanishing of various local cohomology groups.

\begin{definition}[\cite{MacSmith}*{Definition 1.1}] \label{def: multireg}
For $\br \in \Z^2$, we say that a $\Z^2$-graded $S$-module $M$ is $\br$-regular if the following conditions are satisfied:
\begin{enumerate}
    \item $H_B^i(M)_\bp = 0$ for all $i \geq 1$ and all $\bp \in \bigcup (\br-\blambda + \N^2)$ where the union is over all $\blambda = (\lambda,\lambda') \in \N^2$ such that $\lambda+\lambda' = i-1$.
    \item $H_B^0(M)_\bp = 0$ for all $\bp \in (\br+(1,0)+ \N^2) \cup (\br+(0,1)+\N^2)$.
\end{enumerate}
We set $\reg(M) := \{\br \in \Z^2\ |\ M \text{ is } \br\text{-regular}\}$.
\end{definition}
Once we know the elements $\br \in \Z^2$ in the multigraded regularity region of $M$, we can compute the virtual resolution of the pair $(M, \br)$. This is done by using $\br$ to ``trim'' the minimal free resolution of $M$ in a specific way. In the case of $\PP$, the definition is as follows. See \Cref{ex: 4points} for an example of how to find a virtual resolution of a pair.

\begin{definition}[\cite{BES20}*{Theorem 1.3}] \label{def: VirtualResolutionOfPair}
Let $M$ be a finitely generated $\Z^2$-graded, $B$-saturated $S$-module that is $\br$-regular. The free subcomplex of the minimal free resolution of $M$ consisting of all summands generated in degree at most $\br+(1,1)$ is a virtual resolution of $M$ called the \textbf{virtual resolution of the pair} $(M,\br)$.
\end{definition}

We are interested in understanding virtual resolutions of a pair when $M = S/I_X$, where $I_X$ is the ideal defining a finite set of points $X \subseteq \PP$ and $\br$ is a \emph{minimal} element of $\reg(S/I_X)$ with respect to $\preceq$. Note that since $\reg(S/I_X)$ is a region in $\Z^2$, there may be several minimal elements. Although computing $\reg(M)$ is generally challenging, it turns out that $\reg(S/I_X)$ reduces to a simpler definition when $X$ has a generic Hilbert function. To state this definition, we first recall the definition of the Hilbert function of $S/I$, as well as its first and second difference functions, which will play a key role in \Cref{sec: main}.
\begin{definition}\label{difference functions}
For any homogeneous ideal $I \subseteq S$, the \textbf{Hilbert function} of $S/I$ is the function $H_{S/I}: \N^2 \to \N$ given in degree $(i,i')$ by
\[H_{S/I}(i,i') := \dim_k (S/I)_{(i,i')} = \dim_k S_{(i,i')} - \dim_k I_{(i,i')}.\] To simplify notation, when $I = I_X$ is the defining ideal of a subscheme $X \subseteq \PP$, we will denote $H_{S/I_X}$ by $H_X$. Observe that we can view $H_X$ as an infinite matrix $(m_{i,i'})$ for $(i,i') \in \N^2$ by setting $m_{i,i'} := H_X(i,i')$. Then the \textbf{first difference function} of $H_X$, $\Delta H_X: \N^2 \to \N$, is another infinite matrix $\Delta H_X = (c_{i,i'})$ defined by
\[\Delta H_X(i,i') :=  c_{i,i'} = m_{i,i'}+m_{i-1,i'-1}-m_{i,i'-1}-m_{i-1,i'},\] where, by convention, $m_{i,i'} = 0$ if $i<0$ or $i'<0$. Repeating this operation, we define the \textbf{second difference function} of $H_X$, $\Delta^2 H_X: \N^2 \to \N$, to be the infinite matrix $\Delta^2 H_X = (d_{i,i'})$ where
\[\Delta^2 H_X(i,i') = \Delta (\Delta H_X(i,i')) := d_{i,i'} = c_{i,i'}+c_{i-1,i'-1}-c_{i,i'-1}-c_{i-1,i'}.\]
\end{definition}

\begin{example} \label{ex: 4points_part2}
    Let $X$ be as in \Cref{ex: 4points}. Then $H_X$ and its difference functions are given by the following infinite matrices, where the rows and columns are indexed by $0, 1, 2, \dots$:
    {\tiny
    \[
    H_X =
    \begin{bmatrix}
        1 & 2 & 3 & 4 & 4 & 4 & \\
        2 & 4 & 4 & 4 & 4 & 4 & \\
        3 & 4 & 4 & 4 & 4 & 4 & \\
        4 & 4 & 4 & 4 & 4 & 4 & \tiny\cdots\\
        4 & 4 & 4 & 4 & 4 & 4 & \\
        4 & 4 & 4 & 4 & 4 & 4 &\\
        & & & \vdots & & &
    \end{bmatrix}
    \ \Delta H_X =
    \begin{bmatrix}
        1 & 1 & 1 & 1 & 0 & 0 &\\
        1 & 1 & -1 & -1 & 0 & 0 &\\
        1 & -1 & 0 & 0 & 0 & 0 &\\
        1 & -1 & 0 & 0 & 0 & 0 & \cdots\\
        0 & 0 & 0 & 0 & 0 & 0 & \\
        0 & 0 & 0 & 0 & 0 & 0 &\\
        & & & \vdots & & &
    \end{bmatrix}
    \ \Delta^2 H_X =
    \begin{bmatrix}
        1 & 0 & 0 & 0 & -1 & 0 &\\
        0 & 0 & -2 & 0 & 2 & 0 &\\
        0 & -2 & 3 & 0 & -1 & 0 &\\
        0 & 0 & 0 & 0 & 0 & 0 & \cdots\\
        -1 & 2 & -1 & 0 & 0 & 0 &\\
        0 & 0 & 0 & 0 & 0 & 0 &\\
        & & & \vdots & & &
    \end{bmatrix}
    \]
    }
    
\end{example}

As explored in \cites{GMR92,GMR94,GMR96,HNVT}, when $X$ is a finite set of points in $\PP$, the functions $H_X, \Delta H_X$, and $\Delta^2 H_X$ reveal various algebraic and geometric properties of $X$. One particularly interesting case is when $X$ has a \textbf{generic Hilbert function}, which means that
\[H_X(i,i') = \min\{|X|, (i+1)(i'+1)\} \text{ for all $(i,i') \in \N^2$}.\]

Note that if $X$ has a generic Hilbert function, then $H_X$ and its difference functions are all symmetric matrices, so it suffices to study entries with $i\leq i'$. Observe that this is the case in \Cref{ex: 4points_part2}. Therefore, switching the roles of $i$ and $i'$ in \Cref{thm: mainthm,thm: converse} gives the corresponding statements when $i >i'$. Also, $I_X$ is a homogeneous ideal, and since the ideal of each point is $B$-saturated, $I_X$ is a \textbf{$B$-saturated ideal}, i.e. $I_X = \bigcup_{n=1}^\infty (I_X : B^n)$. This implies that condition $(2)$ of \Cref{def: multireg} is satisfied for $M=S/I_X$.

Furthermore, if $X$ has a generic Hilbert function, then we can utilize \cite{MacSmith}*{Proposition 6.7}, which says that $\br \in \reg(S/I_X)$ if and only if the space of forms vanishing on $X$ has codimension $|X|$ in the space of forms of degree $\br$. Since this happens precisely when the Hilbert function $H_X$ agrees with the Hilbert polynomial of $S/I_X$, we see that \Cref{def: multireg} simplifies to the following.

\begin{proposition} \label{prop: reg}
    Let $X \subseteq \PP$ be a finite set of points with a generic Hilbert function. Then
    \[\reg(S/I_X) = \{(i,i') \ |\ H_X(i,i') = |X|\} = \{(i,i')\ |\ (i+1)(i'+1) \geq |X|\}.\]
\end{proposition}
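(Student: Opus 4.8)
The plan is to prove the two equalities in \Cref{prop: reg} separately. The second equality is just the observation that, for points with a generic Hilbert function, $H_X(i,i') = \min\{|X|, (i+1)(i'+1)\} = |X|$ holds precisely when $(i+1)(i'+1) \geq |X|$; this is immediate from the definition of a generic Hilbert function and requires no work. So the content is entirely in the first equality, which I would establish using the simplified characterization of regularity quoted just before the statement.

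First I would invoke \cite{MacSmith}*{Proposition 6.7}, which (as recalled in the paragraph preceding the proposition) states that $\br \in \reg(S/I_X)$ if and only if the forms vanishing on $X$ have codimension $|X|$ inside the space of degree-$\br$ forms. The space of degree-$\br$ forms is $S_{\br}$, and the forms vanishing on $X$ are exactly $(I_X)_{\br}$, so this codimension condition reads $\dim_k S_{\br} - \dim_k (I_X)_{\br} = |X|$, which is by definition $H_X(\br) = |X|$. This identifies $\reg(S/I_X)$ with $\{(i,i') \mid H_X(i,i') = |X|\}$, giving the first equality directly. I should note that applying Proposition 6.7 requires the hypotheses it needs, so part of the writeup is checking those apply: here $S/I_X$ is $B$-saturated (as observed in the background text, since each point's ideal is $B$-saturated), and the proposition's equivalence is phrased for exactly this setting.

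The one genuine subtlety — and the step I expect to be the main obstacle to state cleanly rather than to prove — is justifying \emph{why} the codimension/Hilbert-function criterion of Proposition 6.7 applies to \emph{every} $\br$ and not merely to those in some a priori cone. The quoted text explains this via the Hilbert polynomial: the codimension equals $|X|$ exactly when $H_X$ agrees with the Hilbert polynomial of $S/I_X$, and for a finite set of points in $\PP$ the Hilbert polynomial is the constant $|X|$. Thus I would make explicit that the relevant local cohomology vanishing in \Cref{def: multireg} is controlled entirely by whether $H_X(\br)$ has reached its eventual constant value $|X|$; the generic Hilbert function assumption guarantees $H_X(i,i') = \min\{|X|,(i+1)(i'+1)\}$ is monotone and reaches $|X|$ exactly on $\{(i+1)(i'+1) \geq |X|\}$, so no separate check of the local cohomology conditions beyond Proposition 6.7 is needed. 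Once this is spelled out, both equalities follow and the proof is short.
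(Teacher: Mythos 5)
Your proposal is correct and follows essentially the same route as the paper, which likewise derives the first equality from \cite{MacSmith}*{Proposition 6.7} by translating the codimension condition into $H_X(\br)=|X|$ and then uses the generic Hilbert function assumption for the second equality. The additional care you take in noting the $B$-saturation hypothesis and the role of the Hilbert polynomial matches the surrounding discussion in the paper.
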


Now that we know how to easily compute the multigraded regularity of $S/I_X$, we will discuss the minimal free resolution of $S/I_X$, which will be ``trimmed'' to construct the virtual resolutions of a pair.
As in \cite{GMR92} and \cite{HNVT}, the minimal free resolution $\mc{F}$ of $S/I_X$ is given by
\begin{equation} \label{eq: minres}
\begin{tikzcd}[column sep=small]
\mc{F}: & 0 & S \ar[l] & \ds\bigoplus_{\ell = 1}^m S(-\ba_{1,\ell}) \ar[l] & \ds\bigoplus_{\ell = 1}^n S(-\ba_{2,\ell}) \ar[l] & \ds\bigoplus_{\ell = 1}^p S(-\ba_{3,\ell}) \ar[l] & \ar[l] 0
\end{tikzcd}
\end{equation}
where $\ba_{i,\ell} = (a_{i,\ell},a'_{i,\ell})$. Then the bigraded Betti numbers of $S/I_X$ are
\begin{align*}
    \beta_{0,(0,0)} = 1, \ \beta_{1,\br} = \# \{\ba_{1,\ell}=\br\}, \ \beta_{2,\br} = \# \{\ba_{2,\ell}=\br\},
    \text{ and } \ \beta_{3,\br} = \# \{\ba_{3,\ell}=\br\}.
\end{align*}
Note that our notation differs slightly from that in \cite{HNVT} and \cite{GMR92}; in their notation, we have $\beta_{1,\br} = \alpha_{r,r'}, \beta_{2,\br} = \beta_{r,r'},$ and $\beta_{3,\br} = \gamma_{r,r'}$. In \cite{GMR92}, the authors explore several combinatorial properties of these Betti numbers. We will use the following property which relates the Betti numbers to the entries in $\Delta^2 H_X$ many times in our arguments.
\begin{proposition}[\cite{GMR92}*{Proposition 3.3 (vi)}] \label{prop: bettisum}
    Let $X$ be a set of points in $\PP$ with $\Delta^2 H_X = (d_{i,i'})$. For all $\br = (r,r') \succ (0,0)$, we have $d_{r,r'} = -\beta_{1,\br}+\beta_{2,\br}-\beta_{3,\br}$.
\end{proposition}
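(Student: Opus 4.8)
The plan is to pass to bigraded Hilbert series and observe that the second difference operator is exactly what clears the denominator coming from the Cox ring $S$. Write $\mc{H}(s,t) := \sum_{(i,i') \in \N^2} m_{i,i'}\, s^i t^{i'}$ for the bigraded Hilbert series of $S/I_X$, regarded as a rational function. Since $S$ is a polynomial ring with two variables of degree $(1,0)$ and two of degree $(0,1)$, its Hilbert series is $1/\big((1-s)^2(1-t)^2\big)$, and a twist $S(-\ba)$ with $\ba=(a,a')$ has Hilbert series $s^a t^{a'}/\big((1-s)^2(1-t)^2\big)$.

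First I would compute $\mc{H}(s,t)$ from the minimal free resolution $\mc{F}$ in \eqref{eq: minres}. Because Hilbert series are additive along the exact complex $0 \to F_3 \to F_2 \to F_1 \to S \to S/I_X \to 0$ underlying $\mc{F}$, taking the alternating sum of the series of the free terms gives
\[
\mc{H}(s,t) = \frac{K(s,t)}{(1-s)^2(1-t)^2}, \qquad K(s,t) = \sum_{\br}\Big(\sum_{j=0}^{3}(-1)^j \beta_{j,\br}\Big)\, s^r t^{r'},
\]
where $K$ is a genuine polynomial (the numerator, or $K$-polynomial) because $\mc{F}$ is a finite complex of finitely generated free modules. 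By the description of the bigraded Betti numbers following \eqref{eq: minres}, the $s^r t^{r'}$-coefficient of $K$ is $\beta_{0,\br}-\beta_{1,\br}+\beta_{2,\br}-\beta_{3,\br}$.

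Next I would identify the difference operators with multiplication on the generating-function side. A direct expansion, using the convention $m_{i,i'}=0$ for $i<0$ or $i'<0$, shows that $(1-s)(1-t)\,\mc{H}(s,t) = \sum_{(i,i')} \Delta H_X(i,i')\, s^i t^{i'}$, matching \Cref{difference functions}; iterating once more gives $(1-s)^2(1-t)^2\,\mc{H}(s,t) = \sum_{(i,i')} d_{i,i'}\, s^i t^{i'}$. Combining this with the previous step yields $\sum_{(i,i')} d_{i,i'}\, s^i t^{i'} = K(s,t)$, and comparing coefficients of $s^r t^{r'}$ gives $d_{r,r'} = \beta_{0,\br}-\beta_{1,\br}+\beta_{2,\br}-\beta_{3,\br}$. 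Finally, for $\br \succ (0,0)$ we have $\beta_{0,\br}=0$, since $\beta_{0,(0,0)}=1$ is the only nonzero zeroth Betti number; this removes that term and gives the claimed identity $d_{r,r'} = -\beta_{1,\br}+\beta_{2,\br}-\beta_{3,\br}$.

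The argument is essentially formal, so there is no deep obstacle; the points needing care are justifying that clearing the denominator is legitimate as an identity of formal power series (this is immediate once one knows $K$ is a polynomial, which follows from the finiteness of $\mc{F}$) and verifying the bookkeeping that $\Delta^2$ corresponds to multiplication by $(1-s)^2(1-t)^2$ under the stated sign and boundary conventions. The restriction $\br \succ (0,0)$ is precisely what is needed to discard the single contribution of $\beta_{0,(0,0)}$.
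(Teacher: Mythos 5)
Your argument is correct: identifying $\Delta^2$ with multiplication of the bigraded Hilbert series by $(1-s)^2(1-t)^2$, and reading off the $K$-polynomial from the (augmented, exact) minimal free resolution, is the standard derivation of this identity, and your handling of the boundary conventions and of the $\br \succ (0,0)$ restriction is right. The paper itself does not reprove this statement but imports it from Giuffrida--Maggioni--Ragusa, where it is obtained by essentially the same additivity-of-Hilbert-functions computation, so your proposal matches the intended proof.
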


Our results require that the points in $X$ are in \emph{sufficiently general position}, as defined first by Giuffrida, Maggioni, and Ragusa and used more recently by Harada, Nowroozi, and Van Tuyl. This condition on $X$ ensures that the points not only have a generic Hilbert function but are ``random'' enough to ensure that the minimal generators of $I_X$ are determined by $H_X$. Specifically, a set of $n$ points $X=\{P_1,\dots,P_n\}$ in $\PP$ is in \textbf{sufficiently general position} if $(P_1,\dots,P_n)$ is in the open set $U$ in the following theorem.

\begin{theorem}[\cite{GMR96}*{Theorem 4.3}, \cite{HNVT}*{Theorem 2.13}] \label{thm: suffgenpts}
    Let $n\geq 1$ be an integer. There exists a dense open subset $U \subseteq (\PP)^n$ such that for every $(P_1,\dots,P_n) \in U$, the set of points $X=\{P_1,\dots,P_n\}$ satisfies:
    \begin{enumerate}
        \item $X$ has a generic Hilbert function, and
        \item the nonzero $\beta_{1,\br}$ are precisely given by $\beta_{1,\br} = -d_{r,r'}$ for the entries $d_{r,r'}<0$ such that either $d_{r,i'}>0$ for some $i'>r'$ or $d_{i,r'}>0$ for some $i>r$.
    \end{enumerate}
\end{theorem}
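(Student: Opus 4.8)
The plan is to realize both assertions as genericity statements over the irreducible variety $(\PP)^n$: for each, I would show that the desired property is cut out by the non-vanishing of polynomials in the coordinates of $P_1,\dots,P_n$ (hence is an open condition), and that it holds at some configuration (hence is nonempty). Since a finite intersection of nonempty Zariski-open subsets of the irreducible variety $(\PP)^n$ is again dense and open, it then suffices to treat finitely many open conditions and intersect them to produce $U$. Part $(1)$ feeds into part $(2)$, since the entries $d_{r,r'}$ of $\Delta^2 H_X$ appearing in $(2)$ are computed from the generic Hilbert function established in $(1)$.

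For part $(1)$, I would identify $H_X(\br) = \dim_k(S/I_X)_\br$ with the rank of the evaluation map $S_\br \to k^n$ whose matrix records the values of a basis of $S_\br$ at the points; this rank is well defined independent of the chosen representatives of the $P_j$, and its entries are polynomials in the coordinates. The generic Hilbert function asks that this rank be maximal, equal to $\min\{n,(r+1)(r'+1)\} = \min\{n,\dim_k S_\br\}$, for every $\br$, and maximality of rank is the non-vanishing of some maximal minor, hence open. Only finitely many degrees impose a genuine condition: there are finitely many $\br$ with $(r+1)(r'+1)\leq n$, while the requirement $H_X(\br)=n$ on the remaining degrees propagates from finitely many minimal such $\br$ by monotonicity of $H_X$ in each argument. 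Nonemptiness I would prove by induction on the number of points: if the current points impose fewer than $\dim_k S_\br$ conditions in degree $\br$, then the nonzero space of forms of degree $\br$ vanishing on them has a proper common zero locus (a nonzero form of degree $\br \neq (0,0)$ cuts out a proper subvariety of $\PP$), so a general additional point fails to lie on it and imposes one further independent condition. Intersecting the resulting nonempty open loci over the finitely many relevant degrees yields a dense open $U_1$ on which $X$ has a generic Hilbert function.

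For part $(2)$, I would restrict to $U_1$, where the Hilbert function—hence every $d_{r,r'}$—is constant, so the graded Betti numbers $\beta_{j,\br}$ vary upper-semicontinuously over a flat family. The locus on which each $\beta_{1,\br}$ attains its minimum is therefore open, and a finite intersection over the relevant degrees gives a dense open $U \subseteq U_1$ on which all $\beta_{1,\br}$ are simultaneously minimal. It remains to identify these minimal values. The constraint $d_{r,r'} = -\beta_{1,\br}+\beta_{2,\br}-\beta_{3,\br}$ from \Cref{prop: bettisum} records only the alternating sum, so I would establish two matching bounds at the predicted positions: a lower bound forcing the asserted generators to appear (the Hilbert function cannot be realized by products of lower-degree forms, so a minimal generator of degree $\br$ is unavoidable when $d_{r,r'}<0$ lies on the relevant part of the staircase), and an upper bound from genericity—or from a direct computation at an explicit degenerate configuration, such as a staircase of points with a computable resolution—showing that no further generators occur. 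The combinatorial criterion, existence of a positive entry $d_{r,i'}>0$ with $i'>r'$ or $d_{i,r'}>0$ with $i>r$, is exactly what distinguishes a negative entry that must be a minimal generator (with a later syzygy, signaled by a positive $\Delta^2 H_X$ entry, in its row or column) from a corner negative entry that instead records a last syzygy $\beta_{3,\br}$; matching this against the minimal Betti numbers gives $\beta_{1,\br} = -d_{r,r'}$ there and $\beta_{1,\br}=0$ elsewhere.

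The main obstacle is part $(2)$: ruling out consecutive cancellations in the bigraded Betti table so that $\beta_{1,\br}$ is genuinely separated from $\beta_{2,\br}$ and $\beta_{3,\br}$, rather than merely constrained through their alternating sum. This is precisely a Minimal-Resolution-Conjecture-type statement for the first syzygies, and the crux is to show that the sign-and-position pattern of $\Delta^2 H_X$ faithfully reads off the generic generator degrees; the lower bound (generators forced by the Hilbert function) and the genericity upper bound (no ghost generators) must be made to meet exactly along the boundary detected by the right/below-positivity condition.
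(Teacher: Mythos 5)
First, a framing point: the paper does not prove \Cref{thm: suffgenpts} at all --- it imports the statement from \cite{GMR96}*{Theorem 4.3} (via \cite{HNVT}*{Theorem 2.13}) as background, so the honest comparison is against Giuffrida--Maggioni--Ragusa's original proof. Measured against that, your part $(1)$ is essentially complete and standard: openness via maximal minors of the evaluation matrices, reduction to finitely many bidegrees using monotonicity of $H_X$ (multiplication by a linear form avoiding the points is injective because $I_X$ is saturated), and nonemptiness by adding one general point at a time. No complaints there.

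The genuine gap is in part $(2)$, where your ``two matching bounds'' are named but not supplied, and the lower-bound mechanism as stated would fail. The identity of \Cref{prop: bettisum} pins down only the alternating sum $-\beta_{1,\br}+\beta_{2,\br}-\beta_{3,\br}=d_{r,r'}$, and the Hilbert function of $S/I_X$ does not determine the ranks of the multiplication maps $S_{(1,0)}\otimes (I_X)_{\br-(1,0)}\oplus S_{(0,1)}\otimes (I_X)_{\br-(0,1)}\to (I_X)_{\br}$; hence a negative entry $d_{r,r'}$ with a positive entry to its right or below is a priori compatible with $\beta_{1,\br}=0$ and $\beta_{3,\br}-\beta_{2,\br}=-d_{r,r'}$, i.e.\ with exactly the consecutive cancellation you are trying to exclude. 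So the claim that ``a minimal generator of degree $\br$ is unavoidable when $d_{r,r'}<0$'' is not a formal consequence of the Hilbert function; even the weaker fact that the \emph{first} nonzero entry of each row of $\Delta^2 H_X$ is negative and counts generators is itself a theorem with real content (\cite{GMR94}*{Lemma 4.2}, cf.\ \Cref{rem: mingens}), not a dimension count. Likewise, your upper bound ``by a direct computation at an explicit degenerate configuration'' is precisely the hard core of \cite{GMR96}: one must actually construct, for every $n$ and every relevant bidegree, a configuration (GMR distribute points along rulings and argue inductively) whose bigraded resolution exhibits no ghost generators in homological degree one, and then invoke semicontinuity. Without that construction the argument is a plan rather than a proof; and note that pushing the same strategy through all homological degrees, as your final paragraph suggests, would amount to the Minimal Resolution Conjecture in this setting, which the paper explicitly flags as open (\Cref{rem: MRC}) --- one reason the cited theorem is restricted to $\beta_{1,\br}$ and requires its own delicate argument.
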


\begin{example} \label{ex: 4points_part3}
    Let $X$ be as in \Cref{ex: 4points,ex: 4points_part2}. Since $X$ has a generic Hilbert function, \Cref{prop: reg} gives that $\reg(S/I_X) = \{(i,i')\ |\ (i+1)(i'+1)\geq 4\}$. Observe that the minimal free resolution of $S/I_X$ shown in \Cref{ex: 4points} has the same structure as $\mc{F}$ in \cref{eq: minres}, and the Betti numbers in the first homological degree are given by
    \[
    \beta_{1,\br} =
    \begin{cases}
        1, & \br = (0,4) \text{ or } (4,0)\\
        2, & \br = (1,2) \text{ or } (2,1)\\
        0, & \text{otherwise}.
    \end{cases}
    \]
    We can verify directly that these points are in sufficiently general position by checking that condition $(2)$ from \Cref{thm: suffgenpts} holds. Notice that the entries $d_{r,r'}\in \Delta^2 H_X$ that are negative and have a positive entry either to the right of or below them are ${d_{0,4} = -1, d_{1,2}=-2,}$ ${d_{2,1}=-2,}$ and $d_{4,0}=-1$, and the negations of these entries give the nonzero $\beta_{1,\br}$.

    Furthermore, observe that the nonzero Betti numbers in the second homological degree are given by
    \[
    \beta_{2,\br} =
    \begin{cases}
        2, & \br = (1,4) \text{ or } (4,1)\\
        3, & \br = (2,2)\\
        0, & \text{otherwise}
    \end{cases}
    \]
    and these match the positive entries $d_{1,4}=2, d_{2,2} = 3$, and $d_{4,2}=2$ in $\Delta^2 H_X$. \Cref{lem: first positives} proves that this follows because these are each the first positive entry in their row.
\end{example}

We refer the reader to Examples 2.14 and 2.15 in \cite{HNVT} to get more familiar with computing the Hilbert function and its difference functions, to see the connection between the entries in $\Delta^2 H_X$ and the Betti numbers $\beta_{1,\br}$, and for an illustrative example of a set of points with a generic Hilbert function that doesn't satisfy condition $(2)$ in \Cref{thm: suffgenpts}.

\begin{remark}\label{rem: mingens}
When $X\subseteq \PP$ is a set of points in sufficiently general position, we see that much is known about the minimal generators of $I_X$. Most importantly, \Cref{thm: suffgenpts} gives that minimal generators of $I_X$ correspond precisely to the negative entries of $\Delta^2 H_X$ that have a positive entry either to the right of or below them. In addition, the sentence following \cite{GMR94}*{Lemma 4.2} says that this lemma implies that all of the minimal generators of $I_X$ must occur in consecutive degrees, i.e. in degrees $(i,i')$ and $(i,i'+1)$ or in degrees $(i,i')$ and $(i+1,i')$. Furthermore, the paragraph on the top of page 202 in \cite{GMR94} describes that if we ignore the entry $d_{0,0}$, then the first nonzero entry (if it exists) of every row (resp. column) of $\Delta^2 H_X$ is negative and corresponds to the number of minimal generators in that degree. These properties can be seen below in \Cref{ex: 21points}.
\end{remark}

\begin{remark} \label{rem: virtresareexact}
    In \cite{GMR96}*{Definition 2.1}, Giuffrida, Maggioni, and Ragusa introduce the notion of a good rectangle of $H_X$. When $X$ has a generic Hilbert function, it follows from \cite{GMR96}*{Proposition 2.3} and \Cref{prop: reg} that degree $(i,i')$ gives a good rectangle of $H_X$ iff $(i-1,i'-1) \in \reg(S/I_X)$. Therefore, \cite{GMR96}*{Proposition 2.7} implies that every virtual resolution of a pair $(S/I_X,(i,i'))$ for an element $(i,i') \in \reg(S/I_X)$ is actually acyclic. Note that this would follow from the Acyclicity Lemma \cite{Eisenbud04}*{Lemma 20.11} if the resolution is length two, but \cite{GMR96}*{Proposition 2.7} gives acyclicity for those that are length three as well. This means that every virtual resolution of a pair considered in this paper is acyclic, and the ones that are length two are thus of Hilbert--Burch type. In \Cref{ex: 21points}, observe that degree $(4,6)$ gives a (minimal) good rectangle of $H_X$ since $(3,5)$ is a (minimal) element of $\reg(S/I_X)$. Moreover, each of the virtual resolutions of a pair in this example is acyclic.
\end{remark}

\section{Results} \label{sec: main}

To prove \Cref{thm: mainthm,thm: converse}, we need to determine the relationships between the Betti numbers of $S/I_X$ and the nonzero entries in the second difference function $\Delta^2 H_X$. If $(i,i')$ is a minimal element of the multigraded regularity of $S/I_X$, then the virtual resolution of a pair $(S/I_X, (i,i'))$ is determined by the Betti numbers up to degree $(i+1,i'+1)$ with respect to the partial order $\preceq$. Therefore, we need to understand the nonzero entries in $\Delta^2 H_X$ up to degree $(i+1,i'+1)$.  \Cref{rem: mingens} describes what is known about the Betti numbers in the first homological degree, and we aim to describe what happens in the second and third homological degrees. In particular, we will prove that in most cases if $d_{i+1,i'+1}$ is non-negative, then $(S/I_X, (i,i'))$ is length two (\Cref{thm: mainthm}), and if $d_{i+1,i'+1}$ is negative, then $(S/I_X,(i,i'))$ is length three (\Cref{thm: converse}) We do so by utilizing the following two lemmas. \Cref{lem: negative d entries} gives numerical conditions that determine exactly when this corner entry is non-negative. In addition, we work out all possible cases for the form of $\Delta^2 H_X$ to determine the structure of the Hilbert--Burch virtual resolutions coming from \Cref{thm: mainthm} and given in \Cref{appendix: complexes}. Then, \Cref{lem: first positives} will show that, assuming the numerical conditions in \Cref{thm: mainthm}, the positive entries in $\Delta^2 H_X$ correspond to Betti numbers in the second homological degree. From here, we will prove that since $d_{i+1,i'+1} \geq 0$ under these hypotheses, the virtual resolution of a pair is length two. Furthermore, if $d_{i+1,i'+1}<0$, then we show that $\beta_{3,(i+1,i'+1)}>0$, and so $(S/I_X, (i+1,i'+1))$ is length three.

\begin{lemma}\label{lem: negative d entries}
Let $X$ be a set of $n \geq 2$ points in sufficiently general position in $\PP$, and
let $(i,i')$ be a minimal element of $\reg(S/I_X)$. By symmetry of $H_X$, without loss of generality, assume that $i \leq i'$. Then $d_{i+1,i'+1} \geq 0$ if and only if one of the following holds:
\begin{enumerate}
    \item[(a)] $i(i'+2) \leq n$, or
    \item[(b)] $i(i'+2) > n$ and $3n-3ii'-2i-2i' \geq 0$;
\end{enumerate}
Furthermore, one can compute the entries in $\Delta^2 H_X$ up to degree $(i+1,i'+1)$ in all possible cases.
\end{lemma}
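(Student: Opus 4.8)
The plan is to compute the single entry $d_{i+1,i'+1}$ directly from the generic Hilbert function and the minimality hypothesis. Write $m_{r,r'} = H_X(r,r') = \min\{n,(r+1)(r'+1)\}$. Iterating the definition of the difference functions, $\Delta^2$ is the two-fold mixed second difference, i.e. the tensor square of the one-dimensional stencil $[1,-2,1]$; concretely, $d_{i+1,i'+1}$ is the weighted sum of the nine values $m_{r,r'}$ for $(r,r')\in\{i-1,i,i+1\}\times\{i'-1,i',i'+1\}$ with the $3\times 3$ weight matrix $\bigl(\begin{smallmatrix}1&-2&1\\-2&4&-2\\1&-2&1\end{smallmatrix}\bigr)$, so that
\[
d_{i+1,i'+1}=m_{i+1,i'+1}+4m_{i,i'}+m_{i-1,i'-1}+m_{i+1,i'-1}+m_{i-1,i'+1}-2m_{i+1,i'}-2m_{i,i'+1}-2m_{i,i'-1}-2m_{i-1,i'}.
\]

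Next I would evaluate these nine entries. Since $(i,i')\in\reg(S/I_X)$ we have $(i+1)(i'+1)\geq n$, and every lattice point $\succeq(i,i')$ has strictly larger product, so $m_{i,i'}=m_{i+1,i'}=m_{i,i'+1}=m_{i+1,i'+1}=n$. Minimality forces $(i-1,i')\notin\reg(S/I_X)$ and $(i,i'-1)\notin\reg(S/I_X)$, i.e. $i(i'+1)<n$ and $(i+1)i'<n$; hence $m_{i-1,i'}=i(i'+1)$, $m_{i,i'-1}=(i+1)i'$, and also $m_{i-1,i'-1}=ii'<n$. Substituting these (the substitution is valid even when $i=0$, since then $ii'=0=i(i'+1)$ matches the boundary convention) collapses the formula to
\[
d_{i+1,i'+1}=n-3ii'-2i-2i'+m_{i-1,i'+1}+m_{i+1,i'-1},
\]
so everything reduces to the two remaining entries $m_{i-1,i'+1}=\min\{n,i(i'+2)\}$ and $m_{i+1,i'-1}=\min\{n,(i+2)i'\}$.

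Because $i\leq i'$ gives $i(i'+2)\leq(i+2)i'$, the cases organize cleanly. If $i(i'+2)>n$ then both entries equal $n$ and $d_{i+1,i'+1}=3n-3ii'-2i-2i'$, which is nonnegative exactly under condition (b). If $i(i'+2)\leq n$, then $m_{i-1,i'+1}=i(i'+2)$ and I split on $(i+2)i'$: when $(i+2)i'\leq n$ the formula yields $d_{i+1,i'+1}=n-ii'\geq 0$ (as $ii'\leq i(i'+2)\leq n$), and when $(i+2)i'>n$ it yields $d_{i+1,i'+1}=2\bigl(n-(i+1)i'\bigr)>0$, positive precisely because minimality gives $(i+1)i'<n$. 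Thus $i(i'+2)\leq n$ always forces $d_{i+1,i'+1}\geq 0$, which is condition (a); since (a) and (b) are complementary, this establishes the stated equivalence.

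For the final ``furthermore'' assertion, I would exploit that $\Delta^2=\Delta_1^2\Delta_2^2$ annihilates both constants and bilinear functions $(r+1)(r'+1)$, so $d_{r,r'}=0$ for every $(r,r')$ whose surrounding $3\times 3$ block lies entirely inside the region $(r+1)(r'+1)\geq n$ or entirely inside $(r+1)(r'+1)<n$ away from the coordinate axes. The nonzero entries of $\Delta^2 H_X$ therefore concentrate along the axes $r=0$, $r'=0$ and along the staircase boundary of the hyperbola $\{(r+1)(r'+1)=n\}$, and it remains to enumerate the finitely many local profiles of that staircase in the block $\preceq(i+1,i'+1)$, evaluating each $m_{r,r'}$ as above. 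I expect this enumeration to be the main obstacle: tracking exactly how $(r+1)(r'+1)=n$ meets the lattice near the corner $(i,i')$ branches according to the relative sizes of $i$, $i'$, and $n$, and each branch produces a slightly different arrangement of the negative generator-entries and positive syzygy-entries. By contrast, the computation of the corner entry $d_{i+1,i'+1}$ above is routine once the stencil and the two minimality inequalities are in place.
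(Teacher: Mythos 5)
Your computation of the corner entry is correct and is essentially the paper's own argument: expanding the two-fold difference into the $3\times 3$ stencil, using $\br$-regularity to set the four entries $m_{r,r'}$ with $(r,r')\succeq(i,i')$ equal to $n$ and minimality to evaluate $m_{i-1,i'}=i(i'+1)$, $m_{i,i'-1}=(i+1)i'$, $m_{i-1,i'-1}=ii'$, you arrive at exactly the paper's formula $d_{i+1,i'+1}=n+ii'-2i(i'+1)-2(i+1)i'+\min\{i(i'+2),n\}+\min\{(i+2)i',n\}$, and your three-way case split on the two $\min$'s (using $i\le i'$ to rule out $i(i'+2)>n\ge(i+2)i'$) yields the same values $n-ii'$, $2(n-(i+1)i')$, and $3n-3ii'-2i-2i'$ and hence the stated equivalence. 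The $i=0$ boundary check is a nice touch.

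The gap is the ``furthermore'' clause, which you explicitly defer and correctly identify as the main obstacle --- but that deferral is not harmless, because those explicit tables of $\Delta^2 H_X$ (not just the sign of the corner entry) are precisely what \Cref{thm: mainthm}, \Cref{thm: converse}, and \Cref{appendix: complexes} consume. Your sketch of ``enumerating local profiles of the staircase'' does not yet show the enumeration is finite and small near $(i,i')$. The paper makes it tractable by introducing the adjacent minimal regularity elements $(i+1,j')$ and $(j,i'+1)$ and proving sharp constraints on them: in Case 3 one shows $j=i-1$ necessarily (via $(i-1)(i'+2)<n$) and $j'\in\{i'-2,i'-1\}$ (via $(i+2)(i'-2)<n$, using that $i'<2i$ is forced in that case), which collapses the a priori unbounded staircase geometry to five explicit subcases (1, 2.1, 2.2, 3.1, 3.2), each with a fully written-out matrix and sign analysis of every entry $d_{r,r'}$ with $(r,r')\preceq(i+1,i'+1)$. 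Without identifying and proving these constraints on $j$ and $j'$, the second half of the lemma remains unestablished, so the proposal as written proves the equivalence but not the full statement.
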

\begin{proof}

Consider the Hilbert function $H_X$ near position $(i,i')$:
{\scriptsize
\[
H_X = \
\begin{NiceMatrix}
&  \cdots & i'-1 & i' & i'+1 & \cdots \\
\vdots & \ddots  & & \vdots & & \\
i-1 &  & ii' & i(i'+1) & \min\{i(i'+2),n\} & \\
i & &  (i+1)i'& n & n & \\
i+1 &  & \min\{(i+2)i',n\} & n & n & \\
\vdots & & & \vdots & & \ddots \\
\CodeAfter
  \SubMatrix[{2-2}{6-6}]
\end{NiceMatrix}
\]
}
Since $(i,i')$ is a minimal element of $\reg(S/I_X)$, we know that $m_{i,i'} = n$ (so $(i+1)(i'+1) \geq n$), $m_{i-1,i'} = i(i'+1) < n$, $m_{i,i'-1}=(i+1)i'< n$, and $m_{i-1,i'-1}=ii'< n$. We also know that $m_{r,r'} = n$ for every $(r,r') \succeq (i,i')$. However, since the entries $m_{i-1,i'+1}$ and $m_{i+1,i'-1}$ are dependent on the specific values of $i,i'$, and $n$, in order to determine the sign of $d_{i+1,i'+1}$, we need to consider three cases corresponding to the possible values of these two entries (note that since $i \leq i'$, $i(i'+2)\leq (i+2)i'$, so it is not possible for $i(i'+2)>n$ and $(i+2)i' \leq n$). Cases 1 and 2 will show that if $(a)$ holds, then $d_{i+1,i'+1}>0$, and Case 3 will show that if $(b)$ holds, then $d_{i+1,i'+1} \geq 0$. Moreover, if neither $(a)$ nor $(b)$ holds, then we must be in Case 3, and we must have that $d_{i+1,i'+1}<0$. To compute $d_{i+1,i'+1}$ in each case, we will use that by the definition of $\Delta^2 H_X$ (see \Cref{sec: background}) we have
\begin{equation}\label{eq: d value}
d_{i+1,i'+1} = n + ii' - 2i(i'+1) - 2(i+1)i' + \min\{i(i'+2),n\} + \min\{(i+2)i',n\}.
\end{equation}
Furthermore, since they will be used in the proof of \Cref{thm: mainthm}, in each case we will compute all of the entries in $\Delta^2 H_X$ up to degree $(i+1,i'+1)$, and we will determine what is known about the sign of each entry. Note that in each case, $d_{0,0}=1$ and $d_{r,r'}=0$ in all degrees $(0,0) \prec (r,r') \preceq (i+1,i'+1)$ that are not explicitly shown in $\Delta^2 H_X$.

\vspace{1em}
\paragraph{\textit{Case 1:}}
If $i(i'+2) \leq n$ and $(i+2)i' \leq n$, then simplifying \cref{eq: d value} gives $d_{i+1,i'+1}= n-ii'>0$ since $ii'<n$. In this case, we have
{\scriptsize
\[
\Delta H_X = \
\begin{NiceMatrix}
&  \cdots & i'-1 & i' & i'+1 & \cdots \\
\vdots & \ddots  & & \vdots & & \\
i-1 &  & 1 & 1 & 1 & \\
& & & & & \\
i & &  1 & n-ii' & -i & \\
& & & -i-i' & & \\
& & & & & \\
i+1 &  & 1 & -i' & 0 & \\
\vdots & & & \vdots & & \ddots \\
\CodeAfter
  \SubMatrix[{2-2}{9-6}]
\end{NiceMatrix}
\quad \quad
\Delta^2 H_X = \
\begin{NiceMatrix}
&  \cdots & i'-1 & i' & i'+1 & \cdots \\
\vdots & \ddots  & & \vdots & & \\
i-1 &  & 0 & 0 & 0 & \\
& & & & & \\
i & &  0 & n-ii' & -n+ii'+i' & \\
& & & -i-i'-1 & & \\
& & & & & \\
i+1 &  & 0 & -n+ii'+i & n-ii' & \\
\vdots & & & \vdots & & \ddots \\
\CodeAfter
  \SubMatrix[{2-2}{9-6}]
\end{NiceMatrix}
\]
}
Observe also that $d_{i,i'} \leq 0$ since $(i'+1)(i+1)\geq n$, $d_{i,i'+1} < 0$ since $(i+1)i'<n$, and $d_{i+1,i'}<0$ since $i(i'+1)<n$.

\vspace{1em}
\paragraph{\emph{Case 2:}}
If $i(i'+2) \leq n$ and $(i+2)i'>n$, then \cref{eq: d value} becomes $d_{i+1,i'+1} = 2(n-ii'-i') > 0$ since $(i+1)i' = ii' + i' < n$. In this case, to determine $\Delta^2 H_X$ we need to consider the entry $m_{i+1,j'} \in H_X$ which is the first one in that row that is equal to $n$. In other words, $(i+1,j')$ is also a minimal element of $\reg(S/I_X)$. We know that $j'\leq i'-1$ since $m_{i+1,i'-1}=n$, and we can compute that
{\scriptsize
\[
\Delta H_X = \
\begin{NiceMatrix}
&  \cdots & j'-1 & j' & j'+1 & \cdots & i'-1 & i' & i'+1 & \cdots \\
\vdots & \ddots & & \vdots & & & & \vdots & & \\
i-1 & & 1 & 1 & 1 & \cdots & 1 & 1 & 1 & \\
& & & & & & & & & \\
i & & 1 & 1 & 1 & \cdots & 1 & n-ii'-i-i' & -i & \\
& & & & & & & & & \\
i+1 & & 1 & n-ij' & -i-1 & \cdots & -i-1 & -n+ii'+i' & 0 & \\
\vdots &    &  &        -i-2j'-1 & & & & & &\\
& & & \vdots & & & & \vdots & & \ddots \\
\CodeAfter
  \SubMatrix[{2-2}{9-10}]
\end{NiceMatrix}
\]
}
where if $j'=i'-1$, then we only take the column labeled $j'$ and delete the columns labeled $j'+1$ through $i'-1$. We can then compute $\Delta^2 H_X$ in two cases depending on the value of $j'$.

\begin{adjustwidth}{1cm}{}
\item\subparagraph{\emph{Case 2.1:}} If $j' < i'-1$, then we have
{\scriptsize
\[
\Delta^2 H_X = \
\begin{NiceMatrix}
&  \cdots & j'-1 & j' & j'+1 & \cdots & i'-1 & i' & i'+1 & \cdots \\
\vdots & \ddots & & \vdots & & & & \vdots & & \\
i-1 & & 0 & 0 & 0 & \cdots & 0 & 0 & 0 & \\
& & & & & & & & & \\
i & & 0 & 0 & 0 & \cdots & 0 & n-ii' & -n+ii'+i' & \\
& & & & & & & -i-i'-1 & & \\
& & & & & & & & & \\
i+1 & & 0 & n-ij' & -n+ij' & \cdots & 0 & -2n+2ii' & 2(n-ii'-i') & \\
\vdots & & & -i-2j'-2 & +2j' & & &         +2i+2i'+2 &  &\\
 & & & \vdots & & & & \vdots & & \ddots \\
\CodeAfter
  \SubMatrix[{2-2}{10-10}]
\end{NiceMatrix}
\]
}
Observe that $d_{i,i'}\leq 0$, $d_{i,i'+1}<0$, $d_{i+1,j'}\leq 0$ since $(i+1,j') \in \reg(S/I_X)$ implies that $(i+2)(j'+1)\geq n$, $d_{i+1,j'+1}<0$ since $(i+1,j'-1) \notin \reg(S/I_X)$ implies $(i+2)j'<n$, and $d_{i+1,i'}=-2 d_{i,i'}\geq 0$.
\end{adjustwidth}

\begin{adjustwidth}{1cm}{}
\item\subparagraph{\emph{Case 2.2:}} If $j'=i'-1$, then we have
{\scriptsize
\[
\Delta^2 H_X = \
\begin{NiceMatrix}
&  \cdots & i'-2 & i'-1 & i' & i'+1 & \cdots \\
\vdots & \ddots & & & \vdots & & \\
i-1 & & 0 & 0 & 0 & 0 & \\
& & & & & & \\
i & & 0 & 0 &  n-ii' & -n+ii'+i' & \\
& & & & -i-i'-1 & & \\
& & & & & & \\
i+1 & & 0 & n-ii'-2i' & -3n+3ii' & 2(n-ii'-i') & \\
\vdots &    &  &                    & +i+4i' &  &\\
& & &  & \vdots & & \ddots \\
\CodeAfter
  \SubMatrix[{2-2}{10-7}]
\end{NiceMatrix}
\]
}
Here, $d_{i,i'}\leq 0$, $d_{i,i'+1}<0$, $d_{i+1, i'-1}\leq 0$, and $d_{i+1,i'}$ could be negative or non-negative.
\end{adjustwidth}

\vspace{1em}
\paragraph{\emph{Case 3:}}
If $i(i'+2)>n$ and $(i+2)i'>n$, then \cref{eq: d value} becomes $d_{i+1,i'+1}= 3n-3ii'-2i-2i'$. If $(b)$ holds, then $d_{i+1,i'+1}\geq 0$. Otherwise, $d_{i+1,i'+1}<0$.


To compute $\Delta^2 H_X$, we again let $j'$ be the column corresponding to the first entry equal to $n$ in the $(i+1)$st row of $H_X$, and similarly, let $j$ be the row corresponding to the first entry equal to $n$ in the $(i'+1)$st column. In other words, $(i+1,j')$ and $(j,i'+1)$ are also minimal elements of $\reg(S/I_X)$. Then, a priori, $j' \leq i'-1$ and $j \leq i-1$. However, since $i \leq i'$, we must have $j=i-1$. Indeed, since $m_{i-1,i'+1}=n$ in this case, it is enough to show that $m_{i-2,i'+1}= (i-1)(i'+2)<n$. We have $(i-1)(i'+2)= i(i'+1)+ (i-i')-2 < n-2 < n$ since $i(i'+1)<n$ and $i-i'\leq 0$. Thus, $(i-1,i'+1)$ must be a minimal element of $\reg(S/I_X)$. Furthermore, $j' \in \{i-2,i-1\}$. This is because $m_{i+1,i'-3}=(i+2)(i'-2)=(i+1)i'+(i'-2i)-4 < n - 4 < n$, where we have used that $i' < 2i$, which is necessary in order to be in Case 3 since $(i+1)i'<n$, $i(i'+2)>n$, and $i' \geq 2i$ is an inconsistent system of inequalities for $i,i',n \in \N$. So, we really only have two cases to consider. First, we can compute that
{\scriptsize
\[
\Delta H_X = \
\begin{NiceMatrix}
&  \cdots & j'-1 & j' & i'-1 & i' & i'+1 & \cdots \\
\vdots & \ddots & & & \vdots & & & \\
i-2 & & 1 & 1 & 1 & 1 & 1 & \\
& & & & & & & \\
i-1 & & 1 & 1 & 1 & 1 & n-ii'-2i+1 & \\
& & & & & & & \\
i & & 1 & 1 & 1 & n-ii'-i-i' & -n+ii'+i & \\
& & & & & & & \\
i+1 & & 1 & n-ij' & -i-1 & -n+ii'+i' & 0 & \\
\vdots &    &  &        -i-2j'-1 & & & &\\
 & & & & \vdots & & & \ddots \\
\CodeAfter
  \SubMatrix[{2-2}{11-8}]
\end{NiceMatrix}
\]
}
where if $j'=i'-1$, then we only take the column labeled $j'$ and delete the column labeled $i'-1$. We now use $\Delta H_X$ to compute $\Delta^2 H_X$ in two cases depending on the value of $j'$.

\begin{adjustwidth}{1cm}{}
\item\subparagraph{\emph{Case 3.1:}} If $j' = i'-2$, then we have
{\scriptsize
\[
\Delta^2 H_X = \
\begin{NiceMatrix}
&  \cdots & i'-3 & i'-2 & i'-1 & i' & i'+1 & \cdots \\
\vdots & \ddots & & & \vdots & & & \\
i-2 & & 0 & 0 & 0 & 0 & 0 & \\
& & & & & & & \\
i-1   & & 0 & 0 & 0 & 0 & n-ii'-2i & \\
& & & & & & & \\
i & & 0 & 0 & 0 & n-ii' & -3n+3ii' & \\
& & & & &       -i-i'-1 & +4i+i' & \\
& & & & & & & \\
i+1 & & 0 & n-ii'    & -n+ii' & -2n+2ii' & 3n-3ii' & \\
 \vdots &    &  &       +i-2i'+2 & -2i+2i'-4 & +2i+2i'+2 & -2i-2i' &\\
& & & & \vdots & & & \ddots \\
\CodeAfter
  \SubMatrix[{2-2}{12-8}]
\end{NiceMatrix}
\]
}
Here, $d_{i-1,i'+1}<0$, $d_{i,i'}<0$, $d_{i,i'+1}$ could be negative or non-negative, $d_{i+1,i'-2}\leq 0$, $d_{i+1,i'-1}<0$, $d_{i+1,i'}>0$, and $d_{i+1,i'+1}\geq 0$ if and only if $(b)$ holds.
\end{adjustwidth}

\begin{adjustwidth}{1cm}{}
\item\subparagraph{\emph{Case 3.2:}} If $j'=i'-1$, then we have
{\scriptsize
\[
\Delta^2 H_X = \
\begin{NiceMatrix}
&  \cdots & i'-2 & i'-1  & i' & i'+1 & \cdots \\
\vdots & \ddots & & & \vdots & & \\
i-2 & & 0 & 0 & 0 & 0 & \\
& & & & & & \\
i-1   & & 0 & 0 & 0 & n-ii'-2i & \\
& & & & & & \\
i & & 0 & 0 & n-ii' & -3n+3ii' & \\
& & & &     -i-i'-1 & +4i+i' & \\
& & & & & & \\
i+1 & & 0 & n-ii'-2i' & -3n+3ii' & 3n-3ii' & \\
\vdots &    &  &                     & +i+4i' & -2i-2i' &\\
 & & & & \vdots & & \ddots \\
\CodeAfter
  \SubMatrix[{2-2}{12-7}]
\end{NiceMatrix}
\]
}
Here, $d_{i-1,i'+1}<0$, $d_{i,i'}<0$, $d_{i+1,i'-1}<0$, $d_{i,i'+1}$ and $d_{i+1,i'}$ could be negative or non-negative (but note that $d_{i,i'+1}\leq d_{i+1,i'}$), and $d_{i+1,i'+1}\geq 0$ if and only if $(b)$ holds.
\end{adjustwidth}

\vspace{1em}
By examining the three possible cases above, we see that $d_{i+1,i'+1} \geq 0$ if and only if $(a)$ holds (Case 1 or 2) or $(b)$ holds (covered by Case 3).
\end{proof}

The following is our second major lemma. Recall that one of the main challenges in using $\Delta^2 H_X$ to analyze virtual resolutions of a pair comes from the fact that the Minimal Resolution Conjecture is not known in this setting; see \Cref{rem: MRC}. \Cref{lem: first positives} is a partial result in that vein. Giuffrida-Maggioni-Ragusa showed that the first negative entry in a given row or column of $\Delta^2 H_X$ \emph{always} corresponds to the number of minimal generators of $I_X$ of that degree (see \Cref{rem: mingens}); \Cref{lem: first positives} shows that, in a similar way, the first positive entry always corresponds to the number of minimal first syzygies of $I_X$ of that degree. See \Cref{ex: 4points_part3} for an illustration of this in the case of four points.

\begin{lemma} \label{lem: first positives}
Let $X$ be a set of $n \geq 2$ points in sufficiently general position in $\PP$, and let $\mc{F}$ be the minimal free resolution of $S/I_X$ (see \cref{eq: minres}). If $d_{r,r'}$ is the first positive entry in the $r$th row (resp. $r'$th column) of $\Delta^2 H_X$ (excluding the $0$th and $1$st row and column), then $\beta_{2,\br} = d_{r,r'}$, i.e. that positive entry corresponds exactly to the number of first syzygies of degree $\br = (r,r')$. Furthermore, $\beta_{2,(r,u')}=0$ for all $u'<r'$ (resp. $\beta_{2,(u,r')}=0$ for all $u<r$), i.e. there are no first syzygies of smaller degree coming from that row (resp. column).
\end{lemma}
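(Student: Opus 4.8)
The plan is to convert the purely numerical information in $\Delta^2 H_X$ into Betti data using \Cref{prop: bettisum}, and then to rule out spurious third Betti numbers with the minimality of the free resolution $\mc{F}$. Fix a row $r\ge 2$ and let $(r,r')$ be its first positive entry; the column statement follows by the symmetry of $\Delta^2 H_X$ for points with a generic Hilbert function. Since $d_{r,r'}>0$, condition (2) of \Cref{thm: suffgenpts} forces $\beta_{1,(r,r')}=0$, as nonzero first Betti numbers occur only at negative entries, so \Cref{prop: bettisum} reduces the equality $\beta_{2,(r,r')}=d_{r,r'}$ to the single claim $\beta_{3,(r,r')}=0$. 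For the columns $u'<r'$ before this first positive entry I would first observe that $\beta_{2,(r,u')}=\beta_{3,(r,u')}$: if $d_{r,u'}=0$ then $\beta_{1,(r,u')}=0$ by \Cref{thm: suffgenpts}, while if $d_{r,u'}<0$ the positive entry $d_{r,r'}$ to its right in the same row gives $\beta_{1,(r,u')}=-d_{r,u'}$, and in either case \Cref{prop: bettisum} collapses to $\beta_{2,(r,u')}=\beta_{3,(r,u')}$. Thus the whole lemma reduces to showing that the third Betti numbers vanish at $(r,r')$ and at every earlier $(r,u')$.

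The heart of the argument is a descent powered by minimality. Since $\mc{F}$ is minimal, each differential has entries in $\m=\langle x_0,x_1,y_0,y_1\rangle$, so a generator of the third free module of degree $\mathbf b$ maps into generators of the second free module of strictly smaller degree; hence $\beta_{3,\mathbf b}>0$ forces $\beta_{2,\mathbf a}>0$ for some $\mathbf a\prec \mathbf b$. Call a second syzygy at $\mathbf b$ \emph{premature} if it lies at or before the first positive entry of its row. Suppose some premature second syzygy exists, and choose one at a degree $(r,u')$ that is $\preceq$-minimal among all premature second syzygies in every row and column. The dominated first syzygy sits at some $\mathbf a=(s,t)\prec (r,u')$. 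If $s=r$ then $t<u'$, and the identity $\beta_2=\beta_3$ from the first paragraph produces a premature second syzygy at $(r,t)\prec(r,u')$, contradicting minimality; so the genuine difficulty is when $\mathbf a$ lies in a strictly earlier row (or, symmetrically, column).

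The hard part, and the main obstacle, is therefore controlling this cross-row domination, and the plan is to resolve it with a structural fact about the generic Hilbert function: the first positive entries form a $\preceq$-antichain. Concretely, letting $c(a)$ denote the first positive column of row $a$, I would show that $c$ is strictly decreasing where defined, by computing the sign pattern of $\Delta^2 H_X$ from $H_X(i,i')=\min\{n,(i+1)(i'+1)\}$ exactly as in \Cref{lem: negative d entries}; the first positive entry of each row occurs one step past the staircase boundary $(i+1)(i'+1)=n$ of $\reg(S/I_X)$ from \Cref{prop: reg}, and these corners are pairwise incomparable. Granting this, for $s<r$ the corner $(s,c(s))$ must be incomparable to the corner $(r,r')$, forcing $c(s)>r'\ge t$; hence $\mathbf a=(s,t)$ lies strictly before the first positive entry of row $s$ and is itself premature, so $\beta_{3,(s,t)}=\beta_{2,(s,t)}>0$ gives a premature second syzygy at $(s,t)\prec(r,u')$, again contradicting minimality. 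This rules out all premature second syzygies, yielding $\beta_{3,(r,r')}=0$ and $\beta_{2,(r,u')}=\beta_{3,(r,u')}=0$ for $u'<r'$, and with the first paragraph completes the proof. I expect the delicate step to be making the antichain/staircase computation uniform over all rows at once, rather than only in a neighborhood of a single minimal regularity corner as in \Cref{lem: negative d entries}.
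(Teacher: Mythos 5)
Your first paragraph is sound and matches the reductions the paper also makes: \Cref{thm: suffgenpts} plus \Cref{prop: bettisum} reduce everything to the vanishing of $\beta_{3}$ at $(r,r')$ and of $\beta_2=\beta_3$ at the earlier positions in the row. The gap is in the step you yourself flag as the main obstacle. The structural claim you propose to resolve cross-row domination --- that the first positive entries of the rows of $\Delta^2 H_X$ form a $\preceq$-antichain, i.e.\ that $c$ is strictly decreasing --- is false. Take the paper's own \Cref{ex: 21points} with $n=21$: row $5$ has first positive entry $d_{5,4}=5$ and row $6$ (by symmetry with column $6$) has first positive entry $d_{6,4}=2$, so $c(5)=c(6)=4$ and the two corners satisfy $(5,4)\prec(6,4)$. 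More generally the first positive entries sit at translates of the minimal regularity elements by $(1,0)$, $(0,1)$, or $(1,1)$, and these translates are frequently comparable. Once the antichain fails, your descent has a legitimate terminus that is not a contradiction: a putative $\beta_{3,(r,u')}>0$ at or before the corner of row $r$ can be dominated by the genuine first syzygies at the corner $(s,c(s))$ of an earlier row with $(s,c(s))\prec (r,u')$, and nothing in your argument excludes this. Minimality of $\mc{F}$ only gives domination with respect to $\preceq$ on $\Z^2$, which is too weak to confine the descent to a single row.

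The paper closes exactly this hole by a different device: it filters $\mc{F}$ by the first coordinate of the twists, so that the quotient $\mc{C}^r=\mc{C}^{\leq r}/\mc{C}^{\leq r-1}$ is a complex supported entirely in row $r$ whose differentials involve only $y_0,y_1$. Using that each $\mc{C}^{\leq r}$ is a virtual resolution of a pair $(S/I_X,(r-1,N-1))$ and applying the Acyclicity Lemma over $k[y_0,y_1]$, it shows $\mc{C}^r$ is itself a minimal free resolution of a finite-length module. Only then does the ``minimal degrees of generators must strictly increase'' argument apply, and it applies purely in the second coordinate within row $r$, so cross-row domination never arises. To repair your proof you would need some substitute for this localization to a single row; the antichain property is not available, and I do not see a purely numerical replacement, so as written the argument does not go through.
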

\begin{proof}
We consider the following subcomplexes of the minimal free resolution $\mc{F}$: for $r \geq 0$, let $\mc{C}^{\leq r}$ be the subcomplex of $\mc{F}$ consisting of all summands $S(-\ba_{i,\ell})$ where $\ba_{i,\ell}=(a_{i,\ell},a'_{i,\ell})$ satisfies $a_{i,\ell}\leq r$. So, $\mc{C}^{\leq r}$ consists of all of the terms whose first coordinate has degree at most $r$, with no restriction on the degree of the second coordinate. Then $\mc{F}$ has a filtration by the $\mc{C}^{\leq r}$ since there are natural inclusions $\mc{C}^{\leq r-1} \hookrightarrow \mc{C}^{\leq r}$ for each $r\geq 1$. Let $\mc{C}^r$ denote the cokernel of this inclusion for each $r \geq 1$. Note that $\mc{C}^r$ is a free complex of $S$-modules whose summands each have degree $r$ in the first coordinate.

We next aim to show that for $r \geq 2$, $\mc{C}^r$ is the minimal free resolution of a finite length $S$-module. Observe that $\mc{C}^{\leq r}$ for $r \geq 1$ is actually the virtual resolution of the pair $(S/I_X, (r-1,N-1))$ for some sufficiently large $N \geq n$ (see \cref{def: VirtualResolutionOfPair}). Indeed, since $X$ is a set of $n$ points in sufficiently general position, $(0,n-1) \in \reg(S/I_X)$ (see \Cref{prop: reg}), which implies that $(r-1,N-1) \in \reg(S/I_X)$ for all $r \geq 1$ and $N \geq n$. Thus, we can choose $N$ to be the largest degree in the second coordinate appearing in $\mc{C}^{\leq r}$ to ensure that the virtual resolution of the pair $(S/I_X,(r-1,N-1))$ is precisely $\mc{C}^{\leq r}$. This means that in the short exact sequence of complexes
$0 \longrightarrow \mc{C}^{\leq r-1} \longrightarrow \mc{C}^{\leq r} \longrightarrow \mc{C}^r \longrightarrow 0$
the left and middle complex are both virtual resolutions of $S/I_X$ for each $r\geq 2$. Therefore, the long exact sequence in homology gives that $\mc{C}^r$ must have purely irrelevant homology for each $r \geq 2$, i.e. $H_i(\mc{C}^r)$ is annihilated by some power of $B$ for each $i\geq 0$. In addition, each $\mc{C}^r$ with $r \geq 2$ has projective dimension at most two since its summands solely come from homological degrees one, two, and three in $\mc{F}$, and each $\mc{C}^r$ is actually a complex over $k[y_0,y_1]$ since the maps only involve the second coordinate variables. This means we must actually have that each $H_i(\mc{C}^r)$ is annihilated by some power of $\langle y_0,y_1 \rangle$ and thus has depth zero over $k[y_0,y_1]$. Observe that $\mc{C}^r$ satisfies the hypotheses of the Acyclicity Lemma \cite{Eisenbud04}*{Lemma 20.11} since it has projective dimension at most two and each module in the complex is a free $k[y_0,y_1]$-module and so has depth two. Applying the Acyclicity Lemma gives that each $H_i(\mc{C}^r)=0$ for $i>0$ since otherwise the depth of some homology group would have to be at least one. Thus, each $\mc{C}^r$ with $r\geq 2$ is a minimal free resolution of a finite length $S$-module since $H_i(\mc{C}^r)=0$ for $i>0$ and some power of $B$ annihilates $H_0(\mc{C}^r)$.

Now, to prove the lemma, suppose $d_{r,r'}$ is the first positive entry in the $r$th row of $\Delta^2 H_X$, where $r\geq 2$, and write $\mc{C}^r: 0 \gets C^r_1 \gets C^r_2 \gets C^r_3 \gets 0$ since there are no terms in homological degree zero from $\mc{F}$. Since this is a minimal free resolution, there are no unit entries in the maps, so the minimal degrees of the generators of each module must strictly increase. By \Cref{rem: mingens}, we know precisely the Betti numbers that appear in $C^r_1$: they come from the first and second (if it exists) negative entries in the $r$th row of $\Delta^2 H_X$. Let $d_{r,s'}$ be the first negative entry (so, by our assumption that $d_{r,r'}$ is the first positive entry, $s'<r'$). 

Let $(r,t')$ be the minimal degree of a generator of $C_2^r$. Then $\beta_{2,(r,t')}>0$ and $\beta_{2,(r,u')}=0$ for $u'<t'$, and we want to show that $t'=r'$ and $d_{r,r'} = \beta_{2,(r,r')}$. Since the minimal degrees of generators of the $C^r_i$ must increase, we know that $s'< t'$.

Let us first suppose that $r' = s'+1$. Then by \Cref{thm: suffgenpts}, $\beta_{1,(r,r')}=0$, so \Cref{prop: bettisum} gives that $d_{r,r'}=\beta_{2,(r,r')}-\beta_{3,(r,r')}$. Since the minimal degrees of the generators of $C^r_2$ and $C^r_3$ must increase and $d_{r,r'}>0$ by assumption, we must have that $\beta_{3,(r,r')}=0$ and $d_{r,r'}=\beta_{2,(r,r')}$. Thus, $t' = r'$ as desired.

Next, we consider when $r'>s'+1$, and let's suppose towards a contradiction that $s'<t'<r'$. If $d_{r,s'+1}$ is negative, then by \Cref{prop: bettisum}, $\beta_{1,(r,s'+1)} = -d_{r,s'+1} = \beta_{1,(r,s'+1)}-\beta_{2,(r,s'+1)}+\beta_{3,(r,s'+1)}$, which implies that $\beta_{2,(r,s'+1)}=\beta_{3,(r,s'+1)}$. But then these must both be zero since the minimal degrees of generators of $C^r_2$ and $C^r_3$ are not the same. This shows that $t' \neq s'+1$ if $d_{r,s'+1}$ is negative. If $d_{r,s'+1}$ is not negative, then it must be zero (since it's not the first positive entry) and $\beta_{1,(r,s'+1)}=0$ by \Cref{thm: suffgenpts}. So, $d_{r,(s'+1)} = 0 = \beta_{2,(r,s'+1)}-\beta_{3,(r,s'+1)}$. Again, since degrees must increase, $\beta_{2,(r,s'+1)}=\beta_{3,(r,s'+1)}=0$. Thus, if $r'>s'+1$, then $t'\neq s'+1$ no matter the sign of $d_{r,s'+1}$. This means that $s'+1<t'<r'$. Then $\beta_{1,(r,t')}=0$ by \Cref{rem: mingens}, so we have $d_{r,t'} = \beta_{2,(r,t')}-\beta_{3,(r,t')}$. Since $\beta_{2,(r,t')}>0$ and $d_{r,t'}$ is not positive by our assumption that $t'<r'$, this implies that $\beta_{3,(r,t')}>0$. But this contradicts that the minimal degree of a generator of $C^r_3$ must be larger than $(r,t')$. Therefore, we have shown that $t'\geq r'$. Finally, by similar arguments $d_{r,r'} = \beta_{2,(r,r')}-\beta_{3,(r,r')} > 0$ implies that $\beta_{2,(r,r')}>0$ and $\beta_{3,(r,r')}=0$. Thus, $t'=r'$ and $d_{r,r'}=\beta_{2,(r,r')}$.

By symmetry of $\Delta^2 H_X$, the same argument works for the first positive entry in a given column.
\end{proof}

\begin{theorem} \label{thm: mainthm}
Let $X$ be a set of $n\geq 2$ points in sufficiently general position in $\PP$,
and let $(i,i')$ be a minimal element of $\reg(S/I_X)$. By symmetry, without loss of generality, assume that $i \leq i'$. Then the virtual resolution of a pair $(S/I_X, (i,i'))$ has length two if either of the following holds:
\begin{enumerate}
    \item[(a)] $i(i'+2) \leq n$, or
    \item[(b)] $i(i'+2) > n$, $-3n+3ii'+4i+i'\leq 0$, and $3n-3ii'-2i-2i' \geq 0$.
\end{enumerate}
Moreover, if either $(a)$ or $(b)$ holds, then for all degrees $\br = (r,r') \preceq (i+1,i'+1)$, the bigraded Betti numbers of $S/I_X$ in degree $\br$ can be read from $\Delta^2 H_X$ in the following way:
\begin{enumerate}
    \item[(i)] $\beta_{0,\br} = d_{r,r'}$ iff $\br = (0,0)$;
    
    \item[(ii)] $\beta_{1,\br} = -d_{r,r'}$ iff $d_{r,r'}<0$ and either $d_{r,s'}>0$ for some $s'>r'$ or $d_{s,r'}>0$ for some $s>r$;
    
    \item[(iii)] $\beta_{2,\br} = d_{r,r'}$ iff $d_{r,r'} >0$ and $\br \succ (0,0)$.
    
\end{enumerate}
Furthermore, minimal elements of regularity satisfying either $(a)$ or $(b)$ give rise to seven types of Hilbert--Burch virtual resolutions of a pair, which are described in \Cref{appendix: complexes} as \cref{eq: partacase1complex,eq: partacase2.1complex,eq: partacase2.2negcomplex,eq: partacase2.2poscomplex,eq: partbcase3.1negcomplex,eq: partbcase3.2mixedcomplex,eq: partbcase3.2negcomplex}.
\end{theorem}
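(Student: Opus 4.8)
The plan is to combine the explicit description of $\Delta^2 H_X$ from \Cref{lem: negative d entries} with \Cref{lem: first positives} in order to determine all of $\beta_{1,\br}, \beta_{2,\br}, \beta_{3,\br}$ in degrees $\br \preceq (i+1,i'+1)$, the decisive point being to show that $\beta_{3,\br}=0$ throughout this range so that the virtual resolution of the pair has no third term. By symmetry I assume $i \le i'$, and I dispose of the boundary case $i=0$ first: there $(i+1)(i'+1)=n$, so the conclusion is \cite{HNVT}*{Theorem 3.1}. Hence I may assume $i \ge 1$, which guarantees that the indices $i+1$ and $i'+1$ are both at least $2$, so that \Cref{lem: first positives} is applicable to the $(i+1)$st row and to the $(i'+1)$st column.

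First I would invoke \Cref{lem: negative d entries}: conditions $(a)$ and $(b)$ place us in Cases 1--2 and Case 3 respectively, and in every case the lemma records all entries of $\Delta^2 H_X$ in degrees $\preceq (i+1,i'+1)$ and gives $d_{i+1,i'+1}\ge 0$. Note that the extra inequality $-3n+3ii'+4i+i'\le 0$ in part $(b)$ is exactly the statement $d_{i,i'+1}\le 0$ in Cases 3.1 and 3.2; its role appears below. Then \Cref{thm: suffgenpts} reads off $\beta_{1,\br}$ from the negative entries having a positive entry to their right or below, which is formula $(ii)$. For the first syzygies I would go through the short list of positive entries occurring in each case: in Case 1 the only positive entry $\preceq (i+1,i'+1)$ besides $d_{0,0}$ is the corner $d_{i+1,i'+1}$, while in Cases 2.1, 2.2, 3.1, and 3.2 the positive entries lie among $d_{i+1,i'}$ and $d_{i+1,i'+1}$. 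In each instance I would check from the explicit matrices that the entry in question is the first positive entry of its row or of its column: for example $d_{i+1,i'}$ is preceded in row $i+1$ only by zeros and negative generator entries, and $d_{i+1,i'+1}$ is preceded in column $i'+1$ by $d_{i-1,i'+1}<0$ and $d_{i,i'+1}\le 0$ (this is where the extra inequality of $(b)$ is used) and by zeros above. \Cref{lem: first positives} then yields both $\beta_{2,\br}=d_{r,r'}$ and, crucially, $\beta_{3,\br}=0$ at each such corner, together with the vanishing of $\beta_2$ at all smaller degrees in those rows and columns; this gives formula $(iii)$.

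The heart of the argument is to upgrade these local vanishings to $\beta_{3,\br}=0$ for \emph{all} $\br\preceq (i+1,i'+1)$. In a minimal free resolution any third syzygy in degree $\br$ maps onto generators of the second term of strictly smaller degree, so a nonzero $\beta_{3,\br}$ forces a nonzero $\beta_{2,\bs}$ with $\bs\prec\br$. By the previous paragraph the only second syzygies in the range sit at $(i+1,i')$ and $(i+1,i'+1)$ (and at only the corner in Case 1), since \Cref{lem: first positives} rules out any further $\beta_2$ at smaller degrees in the relevant rows and columns. The only configuration not excluded by a pure degree count is a third syzygy at the corner $(i+1,i'+1)$ lying over the second syzygy at $(i+1,i')$; but this is precisely the $\beta_{3,(i+1,i'+1)}$ that was already killed by applying \Cref{lem: first positives} to column $i'+1$. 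When instead $d_{i+1,i'+1}=0$ one notes that the first positive entry of column $i'+1$ lies strictly below row $i+1$, so $\beta_{2,(i+1,i'+1)}=0$; since $\beta_{1,(i+1,i'+1)}=0$ because $d_{i+1,i'+1}\ge 0$, \Cref{prop: bettisum} forces $\beta_{3,(i+1,i'+1)}=0$. Hence no third syzygy survives in degrees $\preceq(i+1,i'+1)$.

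Finally, since the virtual resolution of the pair $(S/I_X,(i,i'))$ is by definition the subcomplex of $\mc{F}$ on the summands of degree $\preceq (i+1,i'+1)$, the vanishing of all $\beta_{3,\br}$ in this range shows that it has no third term and hence length two; it is acyclic by \Cref{rem: virtresareexact} and therefore of Hilbert--Burch type. Reading $\beta_1$ and $\beta_2$ off the explicit second difference matrices case by case then produces the seven complexes \cref{eq: partacase1complex,eq: partacase2.1complex,eq: partacase2.2negcomplex,eq: partacase2.2poscomplex,eq: partbcase3.1negcomplex,eq: partbcase3.2mixedcomplex,eq: partbcase3.2negcomplex}. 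I expect the main obstacle to be exactly the corner vanishing $\beta_{3,(i+1,i'+1)}=0$: this is the one place where the Minimal Resolution Conjecture must be verified by hand, and it is only the hypothesis $d_{i,i'+1}=-3n+3ii'+4i+i'\le 0$ that makes $d_{i+1,i'+1}$ the first positive entry of its column so that \Cref{lem: first positives} applies. When that inequality fails, $d_{i,i'+1}$ becomes the first positive entry of the column and the argument yields no control over $\beta_{3,(i+1,i'+1)}$, which is exactly the case left open by \Cref{thm: mainthm,thm: converse}.
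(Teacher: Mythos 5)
Your proposal is correct and follows essentially the same route as the paper's proof: reduce $i=0$ to \cite{HNVT}*{Theorem 3.1}, read off $\beta_1$ from \Cref{thm: suffgenpts}, identify the positive entries in each case of \Cref{lem: negative d entries} as first positive entries of their row or column so that \Cref{lem: first positives} and \Cref{prop: bettisum} give $\beta_2=d_{r,r'}$ and $\beta_3=0$ there, and use the strict increase of degrees in a minimal resolution to kill all remaining $\beta_2$ and $\beta_3$ in the range $\preceq(i+1,i'+1)$. You also correctly pinpoint the role of $-3n+3ii'+4i+i'\le 0$ as forcing $d_{i,i'+1}\le 0$ so that $d_{i+1,i'+1}$ is the first positive entry of its column; the only cosmetic difference is that you treat the $d_{i+1,i'+1}=0$ possibility directly, whereas the paper observes it cannot occur under hypothesis $(b)$.
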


\begin{remark}\label{rem: relationtoHNVT}
Note that in the specific case where $n=(i+1)(i'+1)$, condition $(a)$ is satisfied and \Cref{thm: mainthm} exactly reduces to \cite{HNVT}*{Theorem 3.1}. The virtual resolution of a pair that they give agrees with ours in the following way. If $i'\in \{i,i+1\}$, then $(S/I_X,(i,i'))$ is given by \cref{eq: partacase1complex}, which is the same as the complex given in \cite{HNVT}*{Theorem 3.1} by taking $j=i'$, $q=i'$, and $r=0$. Otherwise, if $i'> i+1$, then $(S/I_X,(i,i'))$ is given by \cref{eq: partacase2.1complex}, which is the same as the complex given in \cite{HNVT}*{Theorem 3.1} by taking $j=i'$, $q=j'$ and $r=n-(i+2)j'$.
\end{remark}

\begin{proof}[Proof of \Cref{thm: mainthm}]
The main idea of the proof is to show that if $(a)$ or $(b)$ holds, then $(S/I_X,(i,i'))$ is determined by the nonzero entries in $\Delta^2 H_X$ up to degree $(i+1,i'+1)$ in the sense of $(i), (ii),$ and $(iii)$. Furthermore, using some technical arguments from \Cref{lem: negative d entries,lem: first positives}, we will see that under either of these conditions, $(S/I_X,(i,i'))$ has length two and is given by one of the seven Hilbert--Burch complexes in \Cref{appendix: complexes}.

Let $\mc{F}$ be the minimal free resolution of $S/I_X$ as in \cref{eq: minres}. Observe that $(i)$ follows from $\mc{F}$ and $(ii)$ is a restatement of \Cref{thm: suffgenpts}. So, $(i)$ and $(ii)$ are true for all degrees $\br$, not just for $\br \preceq (i+1,i'+1)$. (See \cref{rem: mingens} for further discussion about the minimal generators of $I_X$.) Therefore, we just need to show that $(iii)$ holds if either $(a)$ or $(b)$ is satisfied.

First, assume $(a)$. Then \Cref{lem: negative d entries} gives that $d_{i+1,i'+1} \geq 0$, and Cases 1 and 2 from the proof of this lemma describe the possibilities for $\Delta^2 H_X$. We will use these descriptions to prove that $(iii)$ holds, which will indicate that $(S/I_X,(i,i'))$ has length two and is given by one of \cref{eq: partacase1complex,eq: partacase2.1complex,eq: partacase2.2negcomplex,eq: partacase2.2poscomplex}.

We first note that if $i=0$, then $i'=n-1$ and $n=(i+1)(i'+1)$. Then, as described in \Cref{rem: relationtoHNVT}, \cite{HNVT}*{Theorem 3.1} gives that $(S/I_X,(i,i'))$ has length two and is given by either \cref{eq: partacase1complex} or \cref{eq: partacase2.1complex}, from which we can see that $(iii)$ holds by comparing the complex to $\Delta^2 H_X$ in Case 1 or Case 2.1, respectively.

Since we have covered the $i=0$ case when $(a)$ holds, we now suppose that $i\geq 1$. Then we have two possibilities: either $(i+2)i'\leq n$ or $(i+2)i'> n$. If the former holds, then $\Delta^2 H_X$ is given in Case 1 of the proof of \Cref{lem: negative d entries}. Since $d_{i+1,i'+1}$ is the only positive entry in $\Delta^2 H_X$ of degree $(0,0) \prec \br \preceq (i+1,i'+1)$, we see that $(iii)$ is true if and only if $\beta_{2,\br}=0$ for all $\br \prec (i+1,i'+1)$ and $\beta_{2,(i+1,i'+1)}=d_{i+1,i'+1}$. Since this is the first positive entry in the $(i+1)$st row and the $(i'+1)$st column, \Cref{lem: first positives} gives that $\beta_{2,(i+1,i'+1)}=d_{i+1,i'+1}$ and $\beta_{2,\br}=0$ for all degrees $\br = (i+1,j')$ with $j'<i'+1$ and $\br = (j,i'+1)$ with $j<i+1$. Furthermore, $\beta_{2,\br}=0$ for the remaining degrees $\br \preceq (i,i')$ since $d_{r,r'}= -\beta_{1,\br}+\beta_{2,\br}-\beta_{3,\br}$ (\Cref{prop: bettisum}), $\beta_{1,\br} = 0$ for all $\br \prec (i,i')$, $\beta_{1,(i,i')}= -d_{i,i'}$, and the Betti numbers must increase in degree, i.e. each first syzygy has to have degree larger than some minimal generator and each second syzygy has to have degree larger than some first syzygy. Thus, $(iii)$ holds, and since $(S/I_X,(i,i'))$ is determined precisely by the Betti numbers of degree at most $(i+1,i'+1)$, this complex has length two and in this situation is given by \cref{eq: partacase1complex}.

If instead $(i+2)i'>n$, then $\Delta^2 H_X$ is given by one of the matrices in Case 2. If it is given by Case 2.1, then we know that the only (potentially) positive entries are $d_{i+1,i'}$ and $d_{i+1,i'+1}$. Since each of these is the first positive entry in a row or column of $\Delta^2 H_X$, \Cref{lem: first positives} gives that $\beta_{2,(i+1,i')}=d_{i+1,i'}$, $\beta_{2,(i+1,i'+1)}=d_{i+1,i'+1}$, and there are no other first syzygies coming from the $(i+1)$st row or the $(i'+1)$st column. Then we can use the same argument as in the previous paragraph to show that $\beta_{2,\br}=0$ for $\br \preceq (i,i')$ to establish $(iii)$, which shows that $(S/I_X,(i,i'))$ has length two and is given by \cref{eq: partacase2.1complex}. If $\Delta^2 H_X$ is given by Case 2.2, then we know that $d_{i+1,i'+1}>0$, but we do not know the sign of $d_{i+1,i'}$. If $d_{i+1,i'}<0$, then by $(ii)$, $\beta_{1,(i+1,i')}=-d_{i+1,i'}$. By \Cref{lem: first positives}, $\beta_{2,(i+1,i'+1)}=d_{i+1,i'+1}$ and there are no syzygies coming from the $(i+1)$st row or $(i'+1)$st column. Then, by the same argument as before, there are no syzygies of smaller degree, so $(iii)$ holds. If $d_{i+1,i'}=0$, then $\beta_{1,(i+1,i')}=0$, so $\beta_{2,(i+1,i')}=\beta_{3,(i+1,i')}=0$ by \Cref{lem: first positives}, and again, we see that $(iii)$ is true. Lastly, if $d_{i+1,i'}>0$, then we apply \Cref{lem: first positives} to both $d_{i+1,i'}$ and $d_{i+1,i'+1}$, and by the same arguments $(iii)$ holds. Therefore, in this situation, no matter the sign of $d_{i+1,i'}$, $(S/I_X,(i,i'))$ has length two and is given by either \cref{eq: partacase2.2negcomplex} if $d_{i+1,i'}\leq 0$ or \cref{eq: partacase2.2poscomplex} if $d_{i+1,i'}>0$.

We have now shown that if $(a)$ holds, then $(S/I_X,(i,i'))$ has length two and is given by one of \cref{eq: partacase1complex,eq: partacase2.1complex,eq: partacase2.2negcomplex,eq: partacase2.2poscomplex}, which are determined by the nonzero entries in $\Delta^2 H_X$ up to degree $(i+1,i'+1)$ in the sense of $(i), (ii)$, and $(iii)$.

Next, assume $(b)$. Then \Cref{lem: negative d entries} gives that $d_{i+1,i'+1} = 3n-2ii'-2i-2i' \geq 0$ and $d_{i,i'+1} = -3n+3ii'+4i+i'\leq 0$, and Case 3 from the proof of this lemma describes the two possibilities for $\Delta^2 H_X$. Note that because $i'<2i$ in Case 3, we must actually have that $d_{i+1,i'+1}>0$ since if it was equal to zero, then that would force $d_{i,i'+1}>0$. We will implore very similar arguments as those used for Cases 1 and 2 to show that $(iii)$ is true and $(S/I_X,(i,i'))$ has length two and is given by one of \cref{eq: partbcase3.1negcomplex,eq: partbcase3.2mixedcomplex,eq: partbcase3.2negcomplex}. If $\Delta^2 H_X$ is given by Case 3.1, then $d_{i,i'+1}\leq 0$ ensures that the only positive entries are $d_{i+1,i'}$ and $d_{i+1,i'+1}$. Then $(iii)$ holds since \Cref{lem: first positives} gives that these entries correspond to minimal first syzygies, and the virtual resolution of a pair is given by \cref{eq: partbcase3.1negcomplex}. If $\Delta^2 H_X$ is given by Case 3.2, then the only entry with undetermined sign is $d_{i+1,i'}$. If this entry is negative, then it corresponds to a minimal generator by \Cref{thm: suffgenpts}, and if it is positive, then it corresponds to a first syzygy by \Cref{lem: first positives}. Either way, since $d_{i+1,i'+1}$ is the first positive entry in the $(i'+1)$st column, we have $\beta_{2,(i+1,i'+1)}=d_{i+1,i'+1}$ by \Cref{lem: first positives}. Thus, $(iii)$ holds and $(S/I_X,(i,i'))$ is given by \cref{eq: partbcase3.2negcomplex} if $d_{i+1,i'}\leq 0$ and by \cref{eq: partbcase3.2mixedcomplex} if $d_{i+1,i'}>0$.
\end{proof}

\begin{proof}[Proof of \Cref{thm: converse}]
If $i(i'+2)>n$ and $3n-3ii'-2i-2i'<0$, then $\Delta^2 H_X$ is given by one of the matrices in Case 3 in the proof of \Cref{lem: negative d entries} and has $d_{i+1,i'+1}<0$. In this case, we can show that $d_{i,i'+1}>0$. Indeed, since $i'<2i$ in Case 3, we have $2i'-i'<4i-2i$, which gives $2i+2i'<4i+i'$. Therefore, $3n-3ii'< 2i+2i'< 4i+i'$, so $d_{i,i'+1}=-3n+3ii'+4i+i'>0$. As in the proof of \Cref{thm: mainthm}, we can use \Cref{lem: first positives} to conclude that $\beta_{2,(i,i'+1)}=d_{i,i'+1}$ and $\beta_{2,(i+1,i')}=d_{i+1,i'}$. Then by \Cref{prop: bettisum}, we have that $d_{i+1,i'+1} = \beta_{2,(i+1,i'+1)}-\beta_{3,(i+1,i'+1)}$. Since this entry is negative, we must have that $\beta_{3,(i+1,i'+1)}>0$, and thus, $(S/I_X,(i,i'))$ has length three.
\end{proof}

\begin{remark}\label{rem: MRC}
    As stated in \cites{Casanellas,MigliorePatnott,MRPL2,BOIJ20191456}, Giuffrida, Maggioni, and Ragusa's work in \cite{GMR96} proves that the Minimal Resolution Conjecture is true for general sets of points lying on a smooth quadric in $\P^3$. Since any smooth quadric is isomorphic to $\PP$, one might initially think that this means that the Minimal Resolution Conjecture holds for general sets of points in $\PP$, the objects of study in this paper. However, we are considering the bigraded Betti numbers of the bihomogeneous ideal of a set of general points, which lives in the bigraded Cox ring $S$ of $\PP$. So, although the Minimal Resolution Conjecture holds for the Betti numbers of the ideal when viewed as living in the standard graded coordinate ring of $\P^3$, it does not, a priori, hold in the setting that we study here. It seems, though, that experts in the field believe the conjecture does hold; however, no one has formally proven this. In the proofs of \Cref{thm: mainthm,thm: converse}, we are able to show that the Minimal Resolution Conjecture is true up to degree $(i+1,i'+1)$ under certain hypotheses which enable us to rule out the possibility of having overlapping Betti numbers.
    
    For example, in the proof of \Cref{thm: converse}, we know only that $d_{i+1,i'+1}=\beta_{2,(i+1,i'+1)}-\beta_{3,(i+1,i'+1)}$. If the Minimal Resolution Conjecture holds, then at most one of these Betti numbers is nonzero. Since we are assuming that $d_{i+1,i'+1}<0$, this would imply that $d_{i+1,i'+1}=-\beta_{3,(i+1,i'+1)}$, and we would then know the entire virtual resolution of a pair $(S/I_X,(i,i'))$, not just that it has length three.

    Furthermore, if we assume that the Minimal Resolution Conjecture for sufficiently general sets of points in $\PP$ is true, then the nonzero entries $d_{r,r'} \in \Delta^2 H_X$ would precisely determine the minimal free resolution of $S/I_X$ in the following sense: $(i), (ii),$ and $(iii)$ from \Cref{thm: mainthm} would hold for all degrees, and we'd have
    \begin{enumerate}   
    \item[$(iv)$] $\beta_{3,\br} = -d_{r,r'}$ iff $d_{r,r'} <0$ and $d_{s,s'}>0$ for some nonzero $(s,s') \prec (r,r')$.
\end{enumerate}
    This follows from \Cref{prop: bettisum} since the Minimal Resolution Conjecture implies that at most one of the Betti numbers $\beta_{1,(r,r')},\beta_{2,(r,r')},\beta_{3,(r,r')}$ is nonzero. Therefore, this would indicate that every virtual resolution of a pair $(S/I_X,(i,i'))$ with $(i,i') \in \reg(S/I_X)$ (not necessarily minimal) is also determined by the nonzero entries in $\Delta^2 H_X$. In particular, this would mean that \Cref{thm: mainthm} would become an if and only if statement after removing the hypothesis that $-3n+3ii'+4i+i'\leq 0$ from condition $(b)$. In other words, for minimal elements of regularity, the sign of $d_{i+1,i'+1}$ would determine the length of $(S/I_X,(i,i'))$: it would have length three if and only if $d_{i+1,i'+1}<0$ and length two otherwise.
\end{remark}

\begin{example} \label{ex: 21points}
    Let $X$ be a set of $n=21$ points in sufficiently general position in $\PP$, and let $I_X \subseteq S$ be its defining ideal. Then $\reg(S/I_X) = \{(i,i') \in \Z^2 | (i+1)(i'+1)\geq 21\}$ has 5 minimal elements with $i \leq i'$: $\{(0,20), (1,10),(2,6),(3,5),(4,4)\}$. Since $X$ has a generic Hilbert function, $H_X$ and $\Delta^2 H_X = (d_{i,i'})$ are given by the following infinite matrices. Because of symmetry, we only show a representative portion of each matrix, and the rows and columns are indexed by $i$ and $i'$ starting at 0, respectively.
    {\scriptsize
    \setcounter{MaxMatrixCols}{25}
    \setlength\arraycolsep{4.2pt}
        \[
        H_X  =
        \begin{bmatrix}
        1 & 2 & 3 & 4 & 5 & 6 & 7 \tikzmark{c} &  8  & 9 & 10 & 11 & 12 & 13 & 14 & 15 & 16 & 17 & 18 & 19 & 20 & 21 & 21\\
        2 & 4 & 6 & 8 & 10 & 12 & 14 & 16 & 18 & 20 & 21 & 21 & 21 & 21 & 21 & 21 & 21 & 21 & 21 & 21 & 21 & 21\\
        3 & 6 & 9 & 12 & 15 & 18 & 21 & 21 & 21 & 21 & 21 & 21 & 21 & 21 & 21 & 21 & 21 & 21 & 21 & 21 & 21 & 21\\
        4 & 8 & 12 & 16 & 20 & 21 & 21 & 21 & 21 & 21 & 21 & 21 & 21 & 21 & 21 & 21 & 21 & 21 & 21 & 21 & 21 & 21\\
        5 & 10 & 15 & 20 & 21 & 21 & 21 \tikzmark{d} & 21 & 21 & 21 & 21 & 21 & 21 & 21 & 21 & 21 & 21 & 21 & 21 & 21 & 21 & 21\\
        \cdashline{1-7}
        6 & 12 & 18 & 21 & 21 & 21 & 21 & 21 & 21 & 21 & 21 & 21 & 21 & 21 & 21 & 21 & 21 & 21 & 21 & 21 & 21 & 21\\
        \end{bmatrix}
        \tikz[remember picture,overlay]
            \draw[dashed,dash pattern={on 4pt off 4pt}] ([xshift=1.5\tabcolsep,yshift=7pt]c.north) -- ([xshift=\tabcolsep,yshift=-2pt]d.south);
        \]
    }
    {\scriptsize
    \setcounter{MaxMatrixCols}{25}
        \[
        \Delta^2 H_X  =
        \left[\begin{array}{ccccccccccccccccccccccc}
        1 & 0 & 0 & 0 & 0 & 0 & 0 \tikzmark{a} &  0  & 0 & 0 & 0 & 0 & 0 & 0 & 0 & 0 & 0 & 0 & 0 & 0 & 0 & -1 \\
        0 & 0 & 0 & 0 & 0 & 0 & 0 & 0 & 0 & 0 & -1 & -1 & 0 & 0 & 0 & 0 & 0 & 0 & 0 & 0 & 0 & 2 \\
        0 & 0 & 0 & 0 & 0 & 0 & 0 & -3 & 0 & 0 & 2 & 2 & 0 & 0 & 0 & 0 & 0 & 0 & 0 & 0 & 0 & -1 \\
        0 & 0 & 0 & 0 & 0 & -3 & -1 & 6 & 0 & 0 & -1 & -1 & 0 & 0 & 0 & 0 & 0 & 0 & 0 & 0 & 0 & 0\\
        0 & 0 & 0 & 0 & -4 & 5 & 2 \tikzmark{b} & -3 & 0 & 0 & 0 & 0 & 0 & 0 & 0 & 0 & 0 & 0 & 0 & 0 & 0 & 0 \\
        \cdashline{1-7}
        0 & 0 & 0 & -3 & 5 & -1 & -1 & 0 & 0 & 0 & 0 & 0 & 0 & 0 & 0 & 0 & 0 & 0 & 0 & 0 & 0 & 0 \\
        \end{array}
        \right]
        \tikz[remember picture,overlay]
            \draw[dashed,dash pattern={on 4pt off 4pt}] ([xshift=1.5\tabcolsep,yshift=7pt]a.north) -- ([xshift=1.5\tabcolsep,yshift=-2pt]b.south);
        \]
    }
    Let's consider the virtual resolution of a pair $(S/I_X, (i,i'))$ for each minimal element of regularity. We will first show how to compute $(S/I_X,(3,5))$ using the nonzero entries in $\Delta^2 H_X$ up to degree $(3,5)+(1,1)=(4,6)$. For $(i,i')=(3,5)$, $i(i'+2)=21$, so condition $(a)$ in \Cref{thm: mainthm} is satisfied. Therefore, the $1$ in degree $(0,0)$ gives $\beta_{0,(0,0)} = 1$, the negative entries in degrees $(3,5), (3,6)$, and $(4,4)$ give $\beta_{1,(3,5)}=3, \beta_{1,(3,6)}=1,$ and $\beta_{1,(4,4)}=4$, and the positive entries in degrees $(4,5)$ and $(4,6)$ give $\beta_{2,(4,5)}=5$ and $\beta_{2,(4,6)}=2$. Thus, the virtual resolution of a pair is as follows (see \cref{eq: partacase2.2poscomplex}):
    {\footnotesize
    \[
    (S/I_X, (3,5)) : \quad \quad 0 \gets S
    \gets    
    \begin{array}{c}
        S(-3,-5)^3\\
         \oplus \\
        S(-3,-6) \\
        \oplus \\
        S(-4,-4)^4
    \end{array}
    \gets
    \begin{array}{c}
        S(-4,-5)^5\\
        \oplus \\
        S(-4,-6)^2
    \end{array}
    \gets 0.
    \]
    }

    For $(i,i')=(2,6)$, $i(i'+2)=16$, so $(a)$ is satisfied, and we have the following (see \cref{eq: partacase2.2negcomplex}):
    {\footnotesize
    \[
    (S/I_X, (2,6)) : \quad \quad 0 \gets S \gets
    \begin{array}{c}
        S(-2,-7)^3\\
        \oplus\\
        S(-3,-5)^3\\
        \oplus\\
        S(-3,-6)
    \end{array}
    \gets
    \begin{array}{c}
        S(-3,-7)^6
    \end{array}
    \gets 0.
    \]
    }

    For $(i,i')=(1,10)$, $i(i'+2)=12$, so $(a)$ is satisfied, and we have the following complex (see \cref{eq: partacase2.1complex} with $j'=6$):
    {\footnotesize
    \[
    (S/I_X, (1,10)) : \quad \quad 0 \gets S \gets
    \begin{array}{c}
        S(-1,-10)\\
        \oplus\\
        S(-1,-11)\\
        \oplus\\
        S(-2,-7)^3
    \end{array}
    \gets
    \begin{array}{c}
        S(-2,-10)^2\\
        \oplus \\
        S(-2,-11)^2
    \end{array}
    \gets 0.
    \]
    }
    
    For $(i,i')=(0,20)$, $i(i'+2)=0$, so $(a)$ is satisfied, and we have the following complex (see \cref{eq: partacase2.1complex} with $j'=10$):
    {\footnotesize
    \[
    (S/I_X, (0,20)) : \quad \quad 0 \gets S \gets
    \begin{array}{c}
        S(0,-21)\\
        \oplus\\
        S(-1,-10)\\
        \oplus\\
        S(-1,-11)
    \end{array}
    \gets
    \begin{array}{c}
        S(-1,-21)^2
    \end{array}
    \gets 0.
    \]
    }

    Finally, for $(i,i')=(4,4)$, $i(i'+2)=24>21$ and $3n-3ii'-2i-2i'=d_{5,5}=-1$, so \Cref{thm: converse} gives that the virtual resolution of a pair has length three. This is the first situation where the virtual resolution of a pair for a minimal element of regularity is \emph{not} Hilbert--Burch; for sets of $n\leq 20$ points, they are all length two! The complex is as follows, and we used \cite{M2} to confirm that $\beta_{2,(5,5)}=0$ and $\beta_{3,(5,5)}=-d_{5,5}=1$.
    {\footnotesize
    \[\quad \quad \quad \quad \quad
    (S/I_X, (4,4)) : \quad \quad 0 \gets S
    \gets
    \begin{array}{c}
        S(-3,-5)^3\\
        \oplus\\
        S(-4,-4)^4\\
        \oplus\\
        S(-5,-3)^3
    \end{array}
    \gets
    \begin{array}{c}
        S(-4,-5)^5\\
        \oplus \\
        S(-5,-4)^5
    \end{array}
    \gets S(-5,-5)
    \gets 0.
    \]
    }
\end{example}

\appendix
\section{Classification of Known Hilbert--Burch $(S/I_X,(i,i'))$}\label{appendix: complexes}
Let $X$ be a set of $n\geq 2$ points in sufficiently general position in $\PP$, and let $(i,i')$ be a minimal element of $\reg(S/I_X)$ with $i \leq i'$. Then \Cref{thm: mainthm} gives sufficient conditions for when the virtual resolution of a pair $(S/I_X,(i,i'))$ is length two. Furthermore, it implies that these virtual resolutions are determined by the nonzero entries in $\Delta^2 H_X$ up to degree $(i+1,i'+1)$ in the sense that the negative entries give the Betti numbers in the first module (the minimal generators) and the positive entries (excluding degree (0,0)) give the Betti numbers in the second module (the first syzygies). Here, we use the $\Delta^2 H_X$ matrices computed in \Cref{lem: negative d entries} to give explicit descriptions of the Hilbert--Burch type $(S/I_X, (i,i'))$ that come from \Cref{thm: mainthm}. In what follows, let $j' \leq i'-1$ be the degree such that $(i+1,j')$ is also a minimal element of $\reg(S/I_X)$, if such a degree exists.

\begin{enumerate}
    \item[\textbullet] If $i(i'+2)\leq n$ and $(i+2)i' \leq n$ (see Case 1 in \Cref{lem: negative d entries}), then
    {\footnotesize
    \begin{equation}\label{eq: partacase1complex}
        (S/I_X,(i,i')): \quad
        0 \gets S \gets
        \begin{array}{c}
            S(-i,-i')^{-n+ii'+i+i'+1}\\
            \oplus\\
            S(-i,-i'-1)^{n-ii'-i'}\\
            \oplus\\
            S(-i-1,-i')^{n-ii'-i}
        \end{array}
        \gets
        S(-i-1,-i'-1)^{n-ii'}
        \gets 0.
    \end{equation}
    }
    
    \item[\textbullet] If $i(i'+2)\leq n$, $(i+2)i'> n$, and $j'<i'-1$ (see Case 2.1 in \Cref{lem: negative d entries}), then
    {\footnotesize
    \begin{equation}\label{eq: partacase2.1complex}
        (S/I_X, (i,i')): \quad
        0 \gets S \gets
        \begin{array}{c}
            S(-i,-i')^{-n+ii'+i+i'+1}\\
            \oplus\\
            S(-i,-i'-1)^{n-ii'-i'}\\
            \oplus\\
            S(-i-1,-j')^{-n+ij'+i+2j'+2}\\
            \oplus\\
            S(-i-1,-j'-1)^{n-ij'-2j'}
        \end{array}
        \gets
        \begin{array}{c}
            S(-i-1,-i')^{2(-n+ii'+i+i'+1)}\\
            \oplus\\
            S(-i-1,-i'-1)^{2(n-ii'-i')}
        \end{array}
        \gets 0.
    \end{equation}
    }
    
    \item[\textbullet] If $i(i'+2)\leq n$, $(i+2)i'> n$, $j'=i'-1$, and $-3n+3ii'+i+4i'\leq 0$ (see Case 2.2 in \Cref{lem: negative d entries}), then
    {\footnotesize
    \begin{equation}\label{eq: partacase2.2negcomplex}
        (S/I_X, (i,i')): \quad
        0 \gets S \gets
        \begin{array}{c}
            S(-i,-i')^{-n+ii'+i+i'+1}\\
            \oplus\\
            S(-i,-i'-1)^{n-ii'-i'}\\
            \oplus\\
            S(-i-1,-i'+1)^{-n+ii'+2i'}\\
            \oplus\\
            S(-i-1,-i')^{3n-3ii'-i-4i'}
        \end{array}
        \gets
        S(-i-1,-i'-1)^{2(n-ii'-i')}
        \gets 0.
    \end{equation}
    }
    
    \item[\textbullet] If $i(i'+2)\leq n$, $(i+2)i'> n$, $j'=i'-1$, and $-3n+3ii'+i+4i'>0$ (see Case 2.2 in \Cref{lem: negative d entries}), then
    {\footnotesize
    \begin{equation}\label{eq: partacase2.2poscomplex}
        (S/I_X, (i,i')): \quad
        0 \gets S 
        \gets
        \begin{array}{c}
            S(-i,-i')^{-n+ii'+i+i'+1}\\
            \oplus\\
            S(-i,-i'-1)^{n-ii'-i'}\\
            \oplus\\
            S(-i-1,-i'+1)^{-n+ii'+2i'}
        \end{array}
        \gets
        \begin{array}{c}
        S(-i-1,-i')^{-3n+3ii'+i+4i'}\\
        \oplus\\
        S(-i-1,-i'-1)^{2(n-ii'-i')}
        \end{array}
        \gets 0.
    \end{equation}
    }
    
    \item[\textbullet] If $i(i'+2)>n$, $3n-3ii'-2i-2i'\geq 0$, $j'=i'-2$, and $-3n+3ii'+4i+i'\leq 0$ (see Case 3.1 in \Cref{lem: negative d entries}), then
    {\footnotesize
    \begin{equation}\label{eq: partbcase3.1negcomplex}
    (S/I_X, (i,i')): \quad
        0 \gets S \gets
        \begin{array}{c}
            S(-i+1,-i'-1)^{-n+ii'+2i}\\
            \oplus\\
            S(-i,-i')^{-n+ii'+i+i'+1}\\
            \oplus\\
            S(-i,-i'-1)^{3n-3ii'-4i-i'}\\
            \oplus\\
            S(-i-1,-i'+2)^{-n+ii'-i+2i'-2}\\
            \oplus\\
            S(-i-1,-i'+1)^{n-ii'+2i-2i'+4}
        \end{array}
        \gets
        \begin{array}{c}
            S(-i-1,-i')^{2(-n+ii'+i+i'+1)}\\
            \oplus\\
            S(-i-1,-i'-1)^{3n-3ii'-2i-2i'}
        \end{array}
        \gets 0.
    \end{equation}
    }

    
    \item[\textbullet] If $i(i'+2)>n$, $3n-3ii'-2i-2i'\geq 0$, $j'=i'-1$, $-3n+3ii'+4i+i'\leq 0$, and $-3n+3ii'+i+4i'\leq 0$ (see Case 3.2 in \Cref{lem: negative d entries}), then
    {\footnotesize
    \begin{equation}\label{eq: partbcase3.2negcomplex}
    (S/I_X, (i,i')): \quad
        0 \gets S \gets
        \begin{array}{c}
            S(-i+1,-i'-1)^{-n+ii'+2i}\\
            \oplus\\
            S(-i,-i')^{-n+ii'+i+i'+1}\\
            \oplus\\
            S(-i,-i'-1)^{3n-3ii'-4i-i'}\\
            \oplus\\
            S(-i-1,-i'+1)^{-n+ii'+2i'}\\
            \oplus\\
            S(-i-1,-i')^{3n-3ii'-i-4i'}
        \end{array}
        \gets
        \begin{array}{c}
            S(-i-1,-i'-1)^{3n-3ii'-2i-2i'}
        \end{array}
        \gets 0.
    \end{equation}
    }
    
    \item[\textbullet] If $i(i'+2)>n$, $3n-3ii'-2i-2i'\geq 0$, $j'=i'-1$, $-3n+3ii'+4i+i'\leq 0$, and $-3n+3ii'+i+4i'>0$ (see Case 3.2 in \Cref{lem: negative d entries}), then
    {\footnotesize
    \begin{equation}\label{eq: partbcase3.2mixedcomplex}
    (S/I_X, (i,i')): \quad
        0 \gets S \gets
        \begin{array}{c}
            S(-i+1,-i'-1)^{-n+ii'+2i}\\
            \oplus\\
            S(-i,-i')^{-n+ii'+i+i'+1}\\
            \oplus\\
            S(-i,-i'-1)^{3n-3ii'-4i-i'}\\
            \oplus\\
            S(-i-1,-i'+1)^{-n+ii'+2i'}
        \end{array}
        \gets
        \begin{array}{c}
            S(-i-1,-i')^{-3n+3ii'+i+4i'}\\
            \oplus\\
            S(-i-1,-i'-1)^{3n-3ii'-2i-2i'}
        \end{array}
        \gets 0.
    \end{equation}
    }

\end{enumerate}

\bibliography{bib}

\end{document}